\documentclass[11pt,reqno]{amsart}

\usepackage[T1]{fontenc}

\usepackage{lmodern}
\usepackage{xcolor}
\usepackage{enumitem}
\usepackage{microtype}
\usepackage{amsmath,amssymb,amscd,mathtools,mathrsfs}

\usepackage[colorlinks=true,linkcolor=blue!45!black,citecolor=blue!45!black,
urlcolor=blue!45!black]{hyperref}
\usepackage{tikz-cd}

\setlist[enumerate]{leftmargin=2.2em,itemsep=.25em,topsep=.35em}
\setlist[itemize]{leftmargin=2em,itemsep=.2em,topsep=.35em}

\newtheorem{theorem}{Theorem}[section]
\newtheorem{proposition}[theorem]{Proposition}
\newtheorem{lemma}[theorem]{Lemma}
\newtheorem{corollary}[theorem]{Corollary}
\theoremstyle{definition}

\newtheorem{definition}[theorem]{Definition}
\theoremstyle{remark}

\newcommand{\La}{\Lambda}

\newcommand{\Z}{\mathbb Z}
\newcommand{\Gm}{\mathbb G_m}
\newcommand{\Fuk}{\mathcal F}
\newcommand{\A}{\mathcal A}
\newcommand{\C}{\mathcal C}
\newcommand{\Perf}{\operatorname{Perf}}

\newcommand{\DPic}{\operatorname{DPic}}
\newcommand{\Auteq}{\operatorname{Auteq}}

\newcommand{\Hom}{\operatorname{Hom}}

\newcommand{\HH}{\operatorname{HH}}
\newcommand{\im}{\operatorname{im}}
\newcommand{\id}{\operatorname{id}}
\newcommand{\ch}{\operatorname{ch}}
\newcommand{\ev}{\operatorname{ev}}

\newcommand{\rk}{\operatorname{rk}}
\newcommand{\Pic}{\operatorname{Pic}}

\newcommand{\Dih}{\operatorname{Dih}}
\newcommand{\vcd}{\operatorname{vcd}}
\newcommand{\PiShift}{\Pi}

\title[Derived Picard groups of closed surfaces]
{The derived Picard group of Fukaya categories of closed surfaces}

\author[D. Wu]{Dongjian Wu}
\address{D. Wu: Shanghai Institute for Mathematics and Interdisciplinary Sciences (SIMIS), Shanghai 200433, China \& Research Institute of Intelligent Complex Systems, Fudan University, Shanghai 200433, China}
\email{wdj@simis.cn}

\author[N. Zhang]{Nantao Zhang}
\address{N. Zhang: School of Mathematics, Sun Yat-sen University, Guangzhou 510275, China}
\email{zhangnt@mail.sysu.edu.cn}

\subjclass[2020]{53D37, 18G80, 57K20}
\keywords{Fukaya category, derived Picard group, mapping class group,
Hochschild cohomology}

\begin{document}

\begin{abstract}
Let $\Sigma_g$ be a closed oriented surface of genus $g\geq2$, and let
$\mathscr F_g$ be its split-closed, strictly unobstructed, two-periodic
Fukaya category over the complex Novikov field $\Lambda$.  We determine
the derived Picard group
\[
 \DPic_{\Lambda}(\mathscr F_g)
 \cong
 \bigl(H^1(\Sigma_g;\Lambda^\times)
 \rtimes\pi_0\operatorname{Diff}^+(\Sigma_g)\bigr)
 \times\Z/2\Z.
\]
This proves the enhanced form of the conjecture of Auroux and Smith \cite{AurouxSmith}. As a corollary, we obtain that the derived Picard group itself determines the genus of the surface.
\end{abstract}

\maketitle

\section{Introduction}
Fukaya categories provide algebraic invariants of symplectic manifolds,
and for surfaces they admit a particularly concrete description
\cite{SeidelBook}.  Objects are curves equipped with local systems,
and the higher products are defined by counting polygons.  The closed
higher-genus case was treated in \cite{Abouzaid}, while a
local-to-global description in terms of pairs of pants was later
obtained in \cite{PascaleffSibilla}.  For surfaces with boundary,
related topological models and their connection with gentle algebras
appear in \cite{HKK,LekiliPolishchuk}.  Throughout this paper, we use
the strictly unobstructed two-periodic category of \cite{AurouxSmith},
with the same Novikov field and the same convention for brane
structures.

Let $\Sigma_g$ be a closed connected oriented surface of genus
$g\geq2$, equipped with an area form of total area one.  We write
\[
 \mathscr F_g=\Perf\bigl(\Fuk(\Sigma_g;\La)\bigr)
\]
for the split-closed Fukaya category.  Its cohomological category is
\[
 D^\pi\Fuk(\Sigma_g;\La)=H^0(\mathscr F_g).
\]

Symplectomorphisms act naturally on $\mathscr F_g$, which induces a map from the mapping class group to categorical symmetries.  We then ask which enhanced autoequivalences are induced by mapping classes and
what the kernel of this map is. The enhanced symmetry group we consider in this paper is the \emph{derived Picard group}
\[
 G_g=\DPic_\La(\mathscr F_g),
\]
which consists of invertible perfect
$\mathscr F_g$--$\mathscr F_g$ bimodules, with product given by
convolution.  Equivalently, it is the group of Morita
autoequivalences together with their $A_\infty$ enhancement.  This is
the usual derived Morita point of view on enhanced categories
\cite{KellerDG,Toen}. 

Our starting point is the geometrization result of
\cite{AurouxSmith}: a spherical object with non-zero Chern character is
represented by a homologically essential simple closed curve carrying a
rank-one local system, and Floer cohomology recognizes when two
such curves have geometric intersection one.  The category therefore
recovers the graph of non-separating curves introduced in
\cite{Schmutz}.  Its rigidity gives a split mapping-class quotient
\[
 \Auteq\bigl(D^\pi\Fuk(\Sigma_g;\La)\bigr)
       \longrightarrow \Gamma_g,
 \qquad \Gamma_g=\pi_0\operatorname{Diff}^+(\Sigma_g).
\]
The kernel was conjectured in \cite{AurouxSmith} to be generated by
flux and by tensoring with flat rank-one local systems, which are recorded by the group
\[
 H_g=H^1(\Sigma_g;\La^\times)
     =\Hom\bigl(H_1(\Sigma_g;\Z),\La^\times\bigr).
\]
Thus the conjectural answer is
\begin{equation}\label{eq:AS-conjecture}
 \Auteq\bigl(D^\pi\Fuk(\Sigma_g;\La)\bigr)
 \overset{?}{\cong}H_g\rtimes\Gamma_g .
\end{equation}
The two-periodic category also has the central shift $[1]$.  It acts
trivially on the unoriented curve graph but changes the sign of the
Chern character.  It must therefore be separated from the character
subgroup.

Our main result determines the enhanced group and, in particular, the
kernel of the mapping-class quotient. 

\begin{theorem}\label{thm:main}
Let $g\geq2$.  After choosing the geometric section $s_g$, there is an
isomorphism
\begin{equation}\label{eq:main-group}
 G_g\cong
 \bigl(H_g\rtimes\Gamma_g\bigr)\times\langle\PiShift\rangle,
 \qquad \PiShift^2=1,
\end{equation}
where the first semidirect product is defined by
\[
 (f\cdot\rho)(\alpha)=\rho(f_*^{-1}\alpha).
\]
In particular, $H_g$ is normal in $G_g$.  The two graph quotients have
the following kernels.
\begin{enumerate}[label=\textup{(\roman*)}]
\item If $g\geq3$, the Floer--Schmutz quotient
      $\Psi_g\colon G_g\twoheadrightarrow\Gamma_g$ has kernel
      \[
       \ker\Psi_g=H_g\times\langle\PiShift\rangle.
      \]
      \smallskip
\item If $g=2$ and $\iota\in\Gamma_2$ is the hyperelliptic involution,
      the oriented lift $\Psi^{\mathrm{or}}_2$ satisfies
      $\Psi^{\mathrm{or}}_2(\PiShift)=\iota$ and
      \[
       \ker\Psi^{\mathrm{or}}_2
       \cong H_2\rtimes_{\rho\mapsto\rho^{-1}}\Z/2
       =\Dih(H_2).
      \]
      A reflection is represented by
      $\PiShift s_2(\iota)^{-1}$.
\end{enumerate}
\end{theorem}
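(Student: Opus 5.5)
We will construct the isomorphism in three stages: first the quotient map to $\Gamma_g$, then a splitting, and finally the identification of the kernel. These correspond to the three layers of \eqref{eq:main-group}.

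\textbf{The quotient map.} By the geometrization result of \cite{AurouxSmith}, every $\Phi\in G_g$ acts on $H^0(\mathscr F_g)$ and permutes isomorphism classes of spherical objects with nonzero Chern character. Up to local system, shift and orientation, these are the homologically essential simple closed curves. Floer cohomology detects geometric intersection one, so $\Phi$ induces an automorphism of the graph of non-separating curves of \cite{Schmutz}. By Schmutz rigidity this automorphism comes from a unique element of the extended mapping class group. We show it is orientation preserving by examining the sign of the intersection pairing, which is recovered from $\chi$ of Floer complexes, or equivalently from the $\Z/2$-graded Euler pairing on $K_0$. This yields $\Psi_g\colon G_g\to\Gamma_g$. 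For $g=2$ the hyperelliptic involution acts trivially on unoriented curves, so the construction only determines a class modulo $\iota$. We therefore lift to oriented curves, that is, we track the sign of the Chern character, and obtain $\Psi_2^{\mathrm{or}}$. Since $\iota$ reverses every curve's homology class, this lift sends $\Pi$ to $\iota$.

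\textbf{The splitting.} The section $s_g$ sends $[\phi]$ to the bimodule of a representative symplectomorphism, made exact by correcting the flux. Choosing zero-flux representatives (Moser), these are well defined up to Hamiltonian isotopy. Hamiltonian isotopic maps give isomorphic bimodules, so $s_g$ is a homomorphism. The group $H_g$ embeds via tensoring with flat rank-one local systems, which on $\Lambda$-coefficients also absorbs flux. Conjugation then gives $s_g(f)\,\rho\, s_g(f)^{-1}=\rho\circ f_*^{-1}$. The shift $\Pi$ is central, and $\Pi\notin H_g\rtimes\Gamma_g$ because it acts by $-1$ on $K_0$ and on the Chern character, whereas elements of $H_g$ act trivially there. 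For $g=2$, $s_2(\iota)$ also acts by $-1$ on $H_1$, but it differs from $\Pi$: it acts nontrivially on degree-zero objects such as points with local systems, or equivalently on $HH^1$ rather than on $HH^{\mathrm{odd}}$ parity. This distinction is exactly why $\Pi s_2(\iota)^{-1}$ is a reflection, inverting $H_2$ by conjugation.

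\textbf{The kernel.} This is the main step and the expected obstacle. Let $\Phi\in\ker\Psi_g$, after composing with $\Pi$ if needed so that Chern characters are preserved. Then $\Phi$ fixes every curve up to local system, so for a chain of curves $a_1,\dots,a_{2g+1}$ we have $\Phi(L_{a_i},\xi_i)\cong(L_{a_i},\xi_i')$. We define $\rho$ on $H_1$ by the ratios of holonomies. Compatibility of these ratios comes from the fact that $\Phi$ preserves the exact triangles given by surgery at intersection points, and hence induces a well-defined character. Composing with $\rho^{-1}$, we obtain $\Phi'$ fixing all objects of a split-generating collection up to isomorphism, for instance the chain curves, which split-generate by Abouzaid's result \cite{Abouzaid}.

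It remains to show $\Phi'\cong\id$ as a bimodule. This is a Hochschild computation. Since $\Phi'$ fixes generators, it is determined by an automorphism of the endomorphism $A_\infty$-algebra $\A$ of the generators that is the identity on objects. The obstruction to trivializing it lies in $HH^1(\mathscr F_g)\cong H^1(\Sigma_g;\Lambda)$ and its integrated form, up to inner automorphisms. Closed–open identifies the degree-one part with first-order local-system deformations, so any remaining automorphism is again a character, which has already been absorbed. Higher obstructions vanish by a filtration argument on the Novikov valuation.

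For $g\ge3$ this gives $\ker\Psi_g=H_g\times\langle\Pi\rangle$. For $g=2$, the oriented lift kills $\Pi s_2(\iota)^{-1}$ together with $H_2$, giving $\Dih(H_2)$.
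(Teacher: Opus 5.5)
Your quotient map and section essentially agree with the paper's, but the kernel step has a genuine gap. After removing the character (and, for $g\geq3$, the shift), you assert that an autoequivalence fixing every chain object is isomorphic to the identity, on the grounds that whatever remains is governed by $\HH^1(\mathscr F_g)\cong H^1(\Sigma_g;\La)$ and is therefore a character. That intermediate claim is false. The element $\kappa_g=R_\eta\PiShift s_g(\iota_g)^{-1}$ fixes every chain curve up to isomorphism: the hyperelliptic involution preserves each curve of the standard chain while reversing its orientation, the shift restores the parity, and $\eta$ restores the rank-one parameter. Yet $\kappa_g$ is non-trivial, since conjugation by it inverts $H_g$.

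What you miss is the first Taylor coefficient. $\HH^1$ and Hochschild integration only control object-fixing functors whose linear term is already the identity. In general, the adjacent products $\mu^2(y_i,x_i)=c_iu_i$ and a tree gauge reduce $F^1$ to a single scalar $d\in\La^\times$, with $F^1(u_i)=du_i$ and $F^1(y_i)=dy_i$. This scalar sits in the linear term, not the positive-arity part, so your $\HH^1$ argument does not see it. Pinning it down needs geometric input your proposal lacks. The functor acts on the algebraic $\Gm$-torsor of rank-one branes through $E_i$ by an algebraic automorphism fixing the base point; this is where the relative Floer complex and the incidence lemma enter. Hence the action is $z\mapsto z^{\pm1}$, and its logarithmic derivative gives $d=\pm1$. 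Only then does $W_2\HH^1(\A_g)=0$ give the dichotomy $F_0\in\{1,\kappa_g\}$. That vanishing comes from the arity filtration and injectivity of joint evaluation, not from a Novikov-valuation filtration.

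Both parts of the theorem rest on that dichotomy. For $g\geq3$ you must still exclude $\kappa_g$, which uses $\Psi_g(\kappa_g)=\iota_g^{-1}\neq1$. Your argument uses only that $\Phi'$ fixes the chain objects, so it cannot make this distinction. For $g=2$ your proposal is internally inconsistent. The kernel step as written would prove $\ker\Psi^{\mathrm{or}}_2=H_2$, yet you conclude $\Dih(H_2)$. Observing that $\PiShift s_2(\iota)^{-1}$ lies in the kernel gives only $\Dih(H_2)\subseteq\ker\Psi^{\mathrm{or}}_2$; equality is exactly the statement $F_0\in\{1,\kappa_2\}$.

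Two smaller points. First, with a chain of $2g+1$ curves the classes satisfy a linear relation and are not a basis of $H_1(\Sigma_g;\Z)$. Your character therefore needs a compatibility argument that you only gesture at via surgery triangles; using $2g$ curves that form an integral basis makes $\rho$ automatic. Second, objects of $\mathscr F_g$ are curves, so ``points with local systems'' cannot separate $s_2(\iota)$ from $\PiShift$. The clean argument is that $\PiShift$ is central, while conjugation by $s_2(\iota)$ inverts $H_2$.
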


For $g\geq3$, the parity shift lies in the kernel of the ordinary
Floer--Schmutz quotient.  Genus two is slightly different.  Reversing all
oriented vertices is the hyperelliptic involution, and the oriented
quotient sends $[1]$ to this class.  This is the reason for the
generalized dihedral kernel in part~\textup{(ii)}.  After quotienting by
the parity shift, equation \eqref{eq:main-group} gives precisely the
enhanced version of \eqref{eq:AS-conjecture}.

The formula
\[
 \dim_\La\HH^{\mathrm{odd}}(\mathscr F_g)=2g,
\]
shows that the enhanced Morita class of $\mathscr F_g$ determines the genus.
The abstract group $G_g$ has the same property. To include the torus,
let $G_1$ be the derived Picard group of the graded Fukaya category of
$T^2$, modulo the square of the shift.

\begin{corollary}
\label{cor:abstract-reconstruction}
Let $g,h\geq1$.  If $G_g\cong G_h$ as abstract groups, then $g=h$.
\end{corollary}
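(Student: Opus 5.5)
We need abstract group invariants of $G_g$ that recover $g$. For $g\ge 2$, Theorem~\ref{thm:main} gives $G_g\cong (H_g\rtimes\Gamma_g)\times\Z/2$, with $H_g=(\La^\times)^{2g}$. For $g=1$, we use the known description of the derived Picard group of the graded Fukaya category of $T^2$ (mirror to an elliptic curve over $\La$), modulo $[2]$. This should be an extension of $\mathrm{SL}_2(\Z)$ by $(\La^\times)^2$, or a close variant of it, together with a finite shift factor. The plan is to find one invariant of abstract groups that takes the value $g$ (or a strictly monotone function of $g$) on each $G_g$.

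\textbf{Choice of invariant.} I would use the maximal rank of a free abelian subgroup $\Z^n\subset G_g$. The divisible torsion in $\La^\times$ makes other invariants awkward, and this one handles it. I would restrict to the non-$H_g$ part by using subgroups that meet a specified characteristic subgroup trivially. Since $\La^\times$ contains $\Z^\infty$, for instance through $T^{\mathbb R}$, the plain free-abelian rank is infinite. So I would instead consider
\[
 r(G)=\max\{\,n : \Z^n\hookrightarrow G/A(G)\,\},
\]
where $A(G)$ is a characteristic subgroup defined intrinsically.

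The natural candidate for $A(G)$ is the largest normal abelian subgroup that is divisible modulo torsion; equivalently, the subgroup generated by all divisible elements. I would check that in $G_g$ this equals $H_g\times 1$, or $H_g$ extended by the finite group $\langle\PiShift\rangle$. First, $\La^\times$ is divisible because $\La$ is algebraically closed, so every element of $H_g$ is divisible. Second, no element outside $H_g\times\langle\PiShift\rangle$ is divisible: its image in $\Gamma_g$ would be an infinitely divisible element of a mapping class group, and mapping class groups have no nontrivial divisible elements because they are virtually torsion-free residually finite groups with finite $\vcd$. Hence $G_g/A(G_g)$ is $\Gamma_g$, possibly modulo a finite central group. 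Its maximal free abelian rank is $3g-3$ by Birman--Lubotzky--McCarthy, and this value is unchanged by finite central quotients. For $g=1$ the analogous quotient is commensurable with $\mathrm{PSL}_2(\Z)$, whose rank is $1$. Then $3g-3$ distinguishes all $g\ge2$ from each other. It separates $g=1$ from $g\ge 3$, but it gives $3$ for $g=2$ and $1$ for $g=1$, which are different, so this works.

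\textbf{Main obstacle.} The hard step is identifying $A(G)$ intrinsically and correctly, especially showing that no divisible element projects nontrivially to $\Gamma_g$ even after using the semidirect twisting. My first attempt would be this: if $x=y^n$ for all $n$, then the image $\bar x\in\Gamma_g$ is infinitely divisible. Passing to a torsion-free finite-index subgroup and using the fact that roots in mapping class groups have bounded order forces $\bar x=1$. If this fails, a fallback is to use $\vcd$ of the quotient, which is $4g-5$ for $g\ge2$ by Harer and $1$ for $\mathrm{SL}_2(\Z)$. A second fallback is the rank of the abelianization of $A(G)$ as a $\mathbb Q$-vector space modulo torsion. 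That is less clean because $\La^\times\otimes\mathbb Q$ has huge dimension, but the count $2g$ of copies could be recovered as the multiplicity of the $\Gamma_g$-action on $H_1(\Sigma_g)$.
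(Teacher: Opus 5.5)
Your proof is correct, but it separates the genera with a different invariant from the paper's.

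\textbf{Common first step.} Both arguments isolate a characteristic divisible subgroup.
\begin{itemize}
\item The paper takes $D_g$ to be the largest divisible abelian normal subgroup. This is $D_g=H_g$ for $g\geq2$, and $D_1=E_Q(\La)\times\widehat E_Q(\La)$ via Orlov's exact sequence. The paper shows $D_g$ is characteristic because abelian subgroups of $\Gamma_g$ are finitely generated and those of $\operatorname{SL}(2,\Z)$ are virtually cyclic.
\item You take the subgroup generated by all divisible elements. Drop the unjustified ``equivalently'' with the ``divisible modulo torsion'' formulation, since you never use it.
\end{itemize}

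\textbf{Your main obstacle.} What you flag as the main obstacle is settled by residual finiteness alone. If $x$ has an $n$-th root for every $n$, its image in any finite quotient $Q$ is an $|Q|$-th power, hence trivial. So $x$ maps to $1$ in the residually finite groups $\Gamma_g\times\Z/2$ and $\operatorname{SL}(2,\Z)$. Consequently your $A(G_g)$ is exactly $H_g$; note that $\PiShift$ is not even a square. Likewise $A(G_1)=D_1$. For $g=1$ you need Orlov's description rather than the guess $(\La^\times)^2$, but only the fact that $\ker(G_1\to\operatorname{SL}(2,\Z))$ is a finite extension of a divisible group.

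\textbf{Where the routes diverge.}
\begin{itemize}
\item The paper reads the genus off $D_g$ itself, via $\dim_{\mathbb F_p}D_g[p]=2g$ (and $4$ for $g=1$). This cannot separate $g=1$ from $g=2$, so it needs Harer's $\vcd(\Gamma_2)=3$ as a tiebreaker.
\item You read the genus off the quotient, via the maximal rank of a free abelian subgroup. Such a subgroup meets the finite factor ($\langle\PiShift\rangle$, resp.\ the finite kernel over $\operatorname{SL}(2,\Z)$) trivially. So the rank equals that of $\Gamma_g$, namely $3g-3$ by Birman--Lubotzky--McCarthy. For $g=1$ it equals that of $\operatorname{SL}(2,\Z)$, namely $1$.
\end{itemize}

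\textbf{Trade-off.} Your single invariant separates all genera at once. The price is that you need the sharp rank bound from BLM, rather than mere finite generation of abelian subgroups. The paper's invariant is tied more directly to $H_1(\Sigma_g)$, but it requires the extra $\vcd$ step.
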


We outline the main steps of the proof. First, choose a filling
$A_{2g}$-chain $E_1,\ldots,E_{2g}$. Its homology classes form an
integral basis, and the corresponding objects split-generate
$\mathscr F_g$. A kernel element may be corrected by one character in
$H_g$ and by a common parity shift. After this correction, it fixes
every object in the chain. The one-dimensional Floer groups between
adjacent objects then force its first Taylor coefficient to be a single
scalar $d\in\La^\times$.

To determine $d$, we vary a curve brane in its algebraic family of
rank-one local systems. The parameter simultaneously records holonomy
and flux. The relative Floer complex of the image family with a target
rank-one family has support on the graph of an algebraic automorphism
of $\Gm$. Differentiating near the reference brane yields
$d=\pm1$. The negative sign is realized by the hyperelliptic involution
composed with the parity shift.

It remains to remove the higher Taylor coefficients.  The joint
evaluation map
\[
 \HH^1(\mathscr F_g)\longrightarrow
 \bigoplus_{i=1}^{2g}HF^1(E_i,E_i)
\]
is injective.  A class represented in arity at least two has zero
evaluation on every $E_i$, and hence the positive-arity part
$W_2\HH^1$ vanishes.  Hochschild integration now implies that an
object-fixing $A_\infty$-autofunctor with identity first Taylor
coefficient is weakly equivalent to the identity. 

The paper is organized as follows.  Section~\ref{sec:input} fixes the
Fukaya-category conventions and recalls the Schmutz graph, the
mapping-class quotient, and the algebraic families of rank-one branes.
Section~\ref{sec:rigidity} proves rigidity for an autoequivalence fixing
the generating chain.  Section~\ref{sec:proof} computes the two kernels
and the full group, and then proves
Corollary~\ref{cor:abstract-reconstruction}.  The appendix records only
the Hochschild integration result used in the rigidity argument.

\subsection*{AI disclosure}
The authors developed the overall strategy, while ChatGPT aided in some proof verification, language editing. The authors take full responsibility for all mathematical claims, arguments, and conclusions, as well as for any remaining errors.

\subsection*{Acknowledgements}
We would like to thank Yu-Wei Fan and Shizhuo Zhang for their continuous support and discussions. D. W. would particularly like to thank Wen Chang for discussions on related topics concerning derived Picard groups.

\section{The Fukaya category and the Schmutz graph}\label{sec:input}
In this section, we recall the surface Fukaya category and the geometric input
used in the proof.  We mainly follow the conventions of
\cite[Sections~2, 6 and 7]{AurouxSmith}.

\subsection{The Fukaya category of a surface}

We use the complex universal Novikov field
\[
 \La=\left\{
 \sum_{j=0}^{\infty}a_jq^{\lambda_j}\ \middle|\
 a_j\in\mathbb C,\ \lambda_j\in\mathbb R,\
 \lambda_j\longrightarrow+\infty
 \right\}.
\]
It is algebraically closed of characteristic zero and carries the
valuation
\[
 \operatorname{val}\colon\La\longrightarrow\mathbb R\cup\{+\infty\},
 \qquad
 \operatorname{val}\!\left(\sum_j a_jq^{\lambda_j}\right)
   =\min\{\lambda_j:a_j\ne0\}.
\]
Put $\La_{\geq0}=\operatorname{val}^{-1}[0,+\infty]$ and
$U_\La=\operatorname{val}^{-1}(0)$.  Thus every $z\in\La^\times$ has a
unique expression
\begin{equation}\label{eq:factorization}
 z=q^s u,
 \qquad s=\operatorname{val}(z)\in\mathbb R,
 \qquad u\in U_\La.
\end{equation}
We write $\Gm$ for the multiplicative group over $\La$, so that
$\Gm(\La)=\La^\times$.

An uncurved two-periodic $A_\infty$-category $\C$ has $\Z/2$-graded
morphism spaces and operations
\[
 \mu^d_\C\colon
 \C(X_{d-1},X_d)\otimes\cdots\otimes\C(X_0,X_1)
 \longrightarrow \C(X_0,X_d)[2-d],
 \qquad d\geq1,
\]
which satisfy the $A_\infty$-relations. Here uncurved means that
$\mu^0=0$. We work throughout with cohomologically unital categories.

Equip $\Sigma_g$ with an area form $\omega$ of total area one.  The
strictly unobstructed Fukaya category
$\Fuk(\Sigma_g;\La)$ is defined as follows.  Its objects are Lagrangian
branes
\begin{equation*}
 X=(\iota\colon S^1\looparrowright\Sigma_g,\xi,\mathfrak s).
\end{equation*}
The curve $\iota(S^1)$ is homotopically essential.  We assume that all its
self-intersections are transverse.  The local system $\xi$ is a finite-rank flat unitary
$\La$-local system on $S^1$.  Unitary means that its monodromy preserves
the valuation filtration.  In rank one the holonomy lies in $U_\La$.
The symbol $\mathfrak s$ denotes a spin structure, which fixes the signs in
the polygon counts.  We choose an orientation whenever the homology class
of a brane is used.  The formal shift $[1]$ changes its parity.

The obstruction term $\mu^0_X$ is the signed Novikov-weighted count of
holomorphic teardrops with boundary on $\iota(S^1)$.  Strict
unobstructedness means
\(
 \mu^0_X=0\). For a
homotopically non-trivial immersed curve this condition is equivalent to
the absence of teardrops.  It is also equivalent to the assertion that
every lift of the curve to the universal cover is embedded
\cite[Lemma~2.2]{AurouxSmith}.  In particular every homotopically
essential embedded curve is strictly unobstructed.

Suppose that two branes $X_0=(\gamma_0,\xi_0)$ and
$X_1=(\gamma_1,\xi_1)$ meet transversely.  Their morphism space is the
Floer cochain complex
\begin{equation}\label{eq:Floer-complex}
 CF^*(X_0,X_1)=
 \bigoplus_{p\in\gamma_0\cap\gamma_1}
 \Hom_\La\bigl((\xi_0)_p,(\xi_1)_p\bigr).
\end{equation}
The local intersection signs and the brane data determine its
$\Z/2$-grading.  The differential $\mu^1$ counts immersed bigons.  The
higher operation $\mu^d$ counts rigid immersed polygons with $d+1$
convex corners.  Each polygon is weighted by its sign, by the parallel
transport maps of the local systems, and by the Novikov factor determined
by its symplectic area.  Standard perturbation data give the same
definition when the curves are not transverse.  Since every object is
strictly unobstructed, these operations define an uncurved two-periodic
$A_\infty$-category. 

Let $\operatorname{Tw}(\C)$ be the category of finite twisted complexes
over an $A_\infty$-category $\C$.  Its idempotent completion is denoted by
\[
 \operatorname{Tw}^\pi(\C)=\Perf(\C).
\]
The split-closed Fukaya category is given by $\mathscr F_g\coloneq\Perf\bigl(\Fuk(\Sigma_g;\La)\bigr)$. A collection split-generates $\mathscr F_g$ when its twisted complexes and
their direct summands contain every object of $\mathscr F_g$.  Whenever a
finite full subcategory is used below, we replace it by a minimal strictly
unital model.  In such a model $\mu^1=0$, so the first Taylor coefficient
of an $A_\infty$-functor is its induced map on Floer cohomology.

Recall that the derived Picard group  $G_g=\DPic_\La(\mathscr F_g)$
 consists of isomorphism classes of invertible perfect
$\mathscr F_g$--$\mathscr F_g$ bimodules.  Its product is the derived
tensor product and convolution with a bimodule gives an enhanced Morita
autoequivalence. An abstract exact functor of the cohomological category
does not by itself provide a bimodule, Taylor coefficients, or algebraic
base change.  This is the reason for working with the enhanced group.

We now fix notation for this action.  Given $F\in G_g$, choose an invertible
perfect bimodule $M_F$ representing its class, and denote the Morita
autoequivalence obtained by convolution with $M_F$ by $\Phi_F$.  For an
object $X$ we write
\[
 F(X):=\Phi_F(X),
\]
which is well defined up to isomorphism.  A
different representative of $F$ gives a naturally isomorphic functor. We write $F_*$ for the induced maps on Floer and Hochschild cohomology.

At several points the restriction of $\Phi_F$ to a finite full
$A_\infty$-subcategory is represented by a strictly unital
$A_\infty$-functor, which we again denote by
\[
 F=(F^0,F^1,F^2,\ldots).
\]
Here $F^0$ is the map on objects.  For $d\geq1$, the term $F^d$ is the
$d$-linear Taylor coefficient on morphism spaces.  In particular,
\[
 F^1\colon\hom(X,Y)\longrightarrow\hom(F^0(X),F^0(Y))
\]
is a chain map.  On a minimal model, where $\mu^1=0$, it is the map $F_*$
on Floer cohomology. 

\subsection{Floer cohomology and spherical objects}

For isotopy classes of simple closed curves, let
\[
 i(\gamma_0,\gamma_1)\coloneq
 \min_{\gamma'_j\simeq\gamma_j}|\gamma'_0\cap\gamma'_1|
\]
be their geometric intersection number.  Curves may be moved to minimal
position without changing the corresponding Fukaya objects.  If their
algebraic intersection is non-zero, no cancellation occurs in the
rank-one Floer complex. By \cite[Corollary~2.11]{AurouxSmith}, for rank-one branes on curves $X_0,X_1$, we have
\begin{equation}\label{eq:intersection}
 \rk_\La HF^*(X_0,X_1)=i(\gamma_0,\gamma_1).
\end{equation}
An object $X\in\mathscr F_g$ is \emph{spherical} when
\[
 HF^*(X,X)\cong H^*(S^1;\La)
\]
with its non-degenerate degree-one pairing.  An embedded curve carrying a
rank-one local system is spherical.  Conversely, a spherical object with
non-zero Chern character is quasi-isomorphic to such a brane
\cite[Theorem~1.1]{AurouxSmith}.

 For a
perfect object $X$, its identity endomorphism has a categorical trace
\begin{equation*}
 \ch(X)\coloneq\operatorname{tr}_X(\id_X)\in\HH_0(\mathscr F_g).
\end{equation*}
The open--closed map sends Hochschild chains to homology classes in the
surface.  For an oriented embedded brane $(\gamma,\xi)$ one has
\[
 \operatorname{OC}\bigl(\ch(\gamma,\xi)\bigr)
 =\rk(\xi)[\gamma].
\]
Thus a simple closed curve has non-zero Chern character exactly when it is
non-separating.  Equivalently, its complement in $\Sigma_g$ is connected. By \cite[Corollary~2.16]{AurouxSmith} there exist open--closed and closed--open maps which give the identifications
\begin{equation}\label{eq:HH-identification}
 \operatorname{OC}\colon\HH_0(\mathscr F_g)\xrightarrow{\sim}
 H_1(\Sigma_g;\La),
 \qquad
 \operatorname{CO}\colon H^1(\Sigma_g;\La)\xrightarrow{\sim}
 \HH^1(\mathscr F_g)
\end{equation} 
If $E$ is an
oriented embedded curve and $a\in H^1(\Sigma_g;\La)$, then
\begin{equation}\label{eq:evaluation}
 \ev_E(a)=\langle a,[E]\rangle u_E,
 \qquad
 u_E\in HF^1(E,E).
\end{equation}
Here $u_E$ is the logarithmic deformation of the rank-one local system on
$E$.

\subsection{Flux, holonomy, and rank-one families}
Let $\gamma_t$, with $0\leq t\leq1$, be an isotopy of an oriented curve.
Its relative flux is the signed area swept by the isotopy.  Thus
\begin{equation*}
 \operatorname{Flux}(\gamma_t)
 =\int_{[0,1]\times S^1}\Gamma^*\omega,
 \qquad \Gamma(t,x)=\gamma_t(x).
\end{equation*}
An isotopy of flux zero, together with parallel transport of the local
system and spin structure, produces a quasi-isomorphic Fukaya object
\cite[Lemma~2.12]{AurouxSmith}. If $\gamma$ is non-separating, embedded representatives with
arbitrary real relative flux exist \cite[Lemma~2.10]{AurouxSmith}.

Fix an oriented isotopy class of a non-separating curve and fix a parity.
Choose a reference rank-one brane $X_\gamma$.  Any other rank-one brane in
this class is obtained from $X_\gamma$ by changing two pieces of data.  The
support may move through an isotopy of real flux $s$, and the local system
may acquire holonomy $u\in U_\La$.  Formula
\eqref{eq:factorization} combines them into
\( z=q^s u\in\La^\times\). Multiplication of $z$ defines a free and transitive action of $\Gm$ on
these branes.  We denote the resulting $\Gm$-torsor by
$\mathfrak B_\gamma$.  The choice of $X_\gamma$ identifies this torsor
with $\Gm$.  A different reference brane replaces the coordinate by
$z\mapsto cz$ for some $c\in\La^\times$.

If the orientation is reversed, the signed flux changes from $s$ to
$-s$ and the holonomy around the oriented circle changes from $u$ to
$u^{-1}$.  After the reference brane on the reversed curve is chosen to
be the dual of $X_\gamma$, the coordinate therefore changes by
\begin{equation*}
 z=q^su\longmapsto q^{-s}u^{-1}=z^{-1}.
\end{equation*}

Choose $a\in H^1(\Sigma_g;\Z)$ with
$\langle a,[\gamma]\rangle=1$, and set
$R\coloneq\La[z,z^{-1}]$.  The rational $\Gm$-action associated
to $a$ \cite[Proposition~6.6]{AurouxSmith} gives the
orbit of $X_\gamma$ as a perfect family of modules
over $R$ \cite[Section~7.2]{AurouxSmith}.  We denote it by
\begin{equation*}
 \mathcal U_\gamma\in
 \Perf(\mathscr F_g\otimes_\La R).
\end{equation*}
For $z_0=q^s u$, where $u\in U_\La$, its fibre is
represented by the image of $X_\gamma$ under an isotopy
of flux $s a$, equipped with the unitary twist whose
holonomy on $\gamma$ is $u$.  Thus $z_0$ records both
flux and holonomy.  In particular, it is not in general
the monodromy of a local system on the fixed support.

We write
\[
 X_\gamma(z_0)=
 \mathcal U_\gamma\otimes_R^{\mathbb L}R/(z-z_0),
 \qquad z_0\in\La^\times.
\]
The family is based at $X_\gamma(1)\simeq X_\gamma$ and its infinitesimal deformation is given by
\begin{equation*}
 \operatorname{KS}(\mathcal U_\gamma)
 =\operatorname{CO}(a)|_{X_\gamma}\otimes d\log z
 =u_\gamma\otimes d\log z,
\end{equation*}
which is independent of the particular picture used to
realize the flux isotopy.

\subsection{The Schmutz graph and the mapping-class quotient}

The graph below was introduced in \cite[Section~2]{Schmutz}.  We use the
terminology of \cite[Section~7.2]{AurouxSmith}.

\begin{definition}
The \emph{Schmutz graph} $\mathcal S(\Sigma_g)$ has one vertex for every
isotopy class of non-separating simple closed curves in $\Sigma_g$.
Distinct vertices $[\gamma]$ and $[\delta]$ are joined by an edge if and only if \(i(\gamma,\delta)=1\). A curve and the same curve with the opposite
orientation define the same vertex.
\end{definition}

By \cite{Schmutz}, the automorphism group of
$\mathcal S(\Sigma_g)$ is isomorphic to the extended mapping class group for
$g\geq3$.  In genus two the action has kernel generated by the
hyperelliptic involution $\iota$.  The orientation-preserving subgroup is
$\Gamma_g=\pi_0\operatorname{Diff}^+(\Sigma_g)$.

We now describe how this graph is recovered from $\mathscr F_g$.  Let
$\mathcal P_g$ be the collection of spherical objects with non-zero Chern
character.  For $X,X'\in\mathcal P_g$, set
\begin{equation}\label{eq:categorical-relation}
 \begin{split}
 X\sim X'\quad\text{ if }\quad
 &\rk HF^*(X,Y)=1\ \Longleftrightarrow\
   \rk HF^*(X',Y)=1\\[-2pt]
 &\hspace{38mm}\text{for every }Y\in\mathcal P_g.
 \end{split}
\end{equation}

The vertices of the \emph{Floer--Schmutz graph} $\Upsilon(\mathscr F_g)$ are given by the
equivalence classes defined above.  Two distinct classes are joined when
the Floer cohomology between representatives has rank one.  The definition is independent of the
representatives.  By the classification of spherical objects and
\eqref{eq:intersection}, the map which forgets the local
system, flux, and parity gives a canonical graph isomorphism
\begin{equation}\label{eq:categorical-Schmutz}
 \Upsilon(\mathscr F_g)\cong\mathcal S(\Sigma_g).
\end{equation}
Indeed, rank-one branes on isotopic supports have the same
intersection-one neighbours, while two non-isotopic non-separating
curves are distinguished by such a neighbour
\cite[Lemma~7.6]{AurouxSmith}.

An enhanced autoequivalence preserves spherical objects, non-vanishing
of the Chern character, and dimensions of Floer cohomology.  It therefore
acts on \eqref{eq:categorical-Schmutz}.  The Euler pairing is the
algebraic intersection form, so this action preserves the orientation of
the surface.  In genus two one refines
\eqref{eq:categorical-relation} by retaining the parity of each
one-dimensional Floer group.  This gives the oriented Floer--Schmutz
graph.  The refinement separates the action of $\iota$ and lifts the quotient to
$\Gamma_2$.

Let $p\colon S(T\Sigma_g)\to\Sigma_g$ be the unit tangent bundle and
choose a one-form $\theta$ with $d\theta=p^*\omega$.  An oriented simple
closed curve is called \emph{balanced} when the integral of $\theta$ over its
tangent lift vanishes.  Every mapping class has a balanced symplectic
representative, unique up to Hamiltonian isotopy, and hence acts on the
Fukaya category.  This gives the geometric homomorphism
\[
 s_g\colon\Gamma_g\longrightarrow G_g.
\]

We summarize the geometric input in the form used below.

\begin{theorem}[{\cite{AurouxSmith,Schmutz}}]\label{thm:AS-input}
Let $X\in\mathscr F_g$ be spherical and suppose $\ch(X)\ne0$.  Then $X$
is represented by a homologically essential embedded simple closed curve
with a rank-one local system.  For rank-one branes $X_\gamma,X_\delta$ on
non-separating curves,
\[
 \rk_\La HF^*(X_\gamma,X_\delta)=1
 \quad\Longleftrightarrow\quad i(\gamma,\delta)=1.
\]
Consequently enhanced autoequivalences act on the Schmutz graph.  This
action gives a split epimorphism
\[
 \Psi_g\colon G_g\twoheadrightarrow\Gamma_g \quad\text{ for }g\geq3
\]
and an oriented split epimorphism
\[
 \Psi^{\mathrm{or}}_2\colon G_2\twoheadrightarrow\Gamma_2.
\]
The splittings are the geometric maps $s_g$ obtained from a balancing
datum.
\end{theorem}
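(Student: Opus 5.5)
Theorem~\ref{thm:AS-input} is a summary of results from \cite{AurouxSmith,Schmutz}, so the proof assembles their statements and adds the observations needed to pass from the cohomological category to the enhanced group $G_g$.

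The first assertion is \cite[Theorem~1.1]{AurouxSmith}. By \eqref{eq:intersection}, a rank-one Floer group between branes on non-separating curves means geometric intersection number one. Next, let $F\in G_g$ with Morita autoequivalence $\Phi_F$. Then $\Phi_F$ preserves $HF^*$ up to isomorphism, so it preserves spherical objects. It also acts on $\HH_0(\mathscr F_g)$ compatibly with $\ch$, so it preserves the condition $\ch\neq0$. Therefore $\Phi_F$ permutes $\mathcal P_g$ and preserves the relation \eqref{eq:categorical-relation} and the rank-one adjacency. Hence it induces an automorphism of $\Upsilon(\mathscr F_g)\cong\mathcal S(\Sigma_g)$ via \eqref{eq:categorical-Schmutz}. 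Naturally isomorphic functors induce the same automorphism, and convolution of bimodules composes these automorphisms. This gives a homomorphism $G_g\to\operatorname{Aut}\mathcal S(\Sigma_g)$.

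For $g\geq3$, Schmutz's theorem identifies $\operatorname{Aut}\mathcal S(\Sigma_g)$ with the extended mapping class group. It remains to show that the image lies in $\Gamma_g$. The induced action on $\HH_0\cong H_1(\Sigma_g;\La)$ (via OC) preserves the Euler pairing, which is the algebraic intersection form. An orientation-reversing class would negate that form, because the action on homology of the vertex classes agrees up to signs with the mapping class action. So the image lies in $\Gamma_g$, defining $\Psi_g$.

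For the splitting, note that $s_g(f)$ acts on curves by the balanced representative of $f$. Thus $\Psi_g\circ s_g=\id$, which gives surjectivity. The main check is that $s_g$ is a homomorphism. Balanced representatives are unique up to Hamiltonian isotopy, and the composite of balanced representatives is balanced. This is because $\theta$-integrals over tangent lifts add under composition, the correction term being exact on $S(T\Sigma_g)$. Hamiltonian isotopic symplectomorphisms induce isomorphic bimodules.

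For $g=2$ the hyperelliptic involution acts trivially on $\mathcal S(\Sigma_2)$, since it fixes every non-separating curve up to orientation. So I would use the oriented refinement instead: vertices are oriented curves, detected by the parity of the one-dimensional Floer groups with neighbours. Reversing every orientation is then $\iota$, and the automorphism group of the oriented graph is $\Gamma_2$ (orientation-preserving, by the same Euler-pairing argument). The same balancing construction gives the splitting. The main obstacle is showing that the parity refinement is intrinsic, i.e. that an enhanced autoequivalence preserves the $\Z/2$-grading data consistently on all edges. I would argue this using the Euler pairing signs, which are preserved.
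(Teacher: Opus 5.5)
Your proposal is correct and follows essentially the paper's route: the graph action comes from preservation of sphericity, $\ch\neq0$ and Floer ranks; the Euler pairing puts the image in $\Gamma_g$; Schmutz rigidity, \cite[Proposition~7.7]{AurouxSmith} and the balanced section give the splitting; Floer parities handle genus two. You give more detail than the paper, which mostly cites, and two small points are worth noting. First, the orientation step needs the sign bookkeeping you gloss over: since $F_*[\gamma]=\epsilon_\gamma f_*[\gamma]$ with signs that may a priori depend on $\gamma$, take three curves with pairwise nonzero algebraic intersection, which for orientation-reversing $f$ would force the impossible $\epsilon_a\epsilon_b=\epsilon_b\epsilon_c=\epsilon_a\epsilon_c=-1$. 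Second, the ``main obstacle'' you flag in genus two is immediate, because a degree-zero autoequivalence of the two-periodic category preserves the $\Z/2$-grading of every $HF^*(X,Y)$.
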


The classification of spherical objects is by
\cite[Theorem~1.1]{AurouxSmith} and the Floer criterion is due to
\cite[Corollary~2.11]{AurouxSmith}.  The split quotients follow from
\cite[Proposition~7.7]{AurouxSmith} and the rigidity theorem for the
Schmutz graph \cite{Schmutz}.  For $g\geq3$ the graph is unoriented.
Therefore, $\Psi_g(\PiShift)=1$. In genus two, changing parity reverses every oriented vertex and the unique
mapping class with that action is the hyperelliptic involution, which implies $\Psi^{\mathrm{or}}_2(\PiShift)=\iota$.

We next record the subgroup which is visibly contained in the graph
kernel.  Every $\rho\in H_g$ has a factorization
\[
 \rho(\alpha)=q^{\ell(\alpha)}\rho_0(\alpha),
 \qquad
 \ell\in H^1(\Sigma_g;\mathbb R),\quad
 \operatorname{val}(\rho_0)=0.
\]
Transport by a symplectic isotopy of flux $\ell$ and tensor by the flat
local system of monodromy $\rho_0$.  Denote the resulting
autoequivalence by $R_\rho$.  Composition multiplies characters, and
application to branes on an integral homology basis shows that
$\rho\mapsto R_\rho$ is injective.  Moreover,
\begin{equation}\label{eq:conjugation}
 s_g(f)R_\rho s_g(f)^{-1}=R_{f\cdot\rho}.
\end{equation}
Thus the geometric section normalizes $H_g$, and $H_g$ lies in the graph kernel.  
At this stage we have proved normalization only by the geometric subgroup
$s_g(\Gamma_g)$.  Normality of $H_g$ in the whole group $G_g$ will follow
in Section~\ref{sec:proof}, after every element of $G_g$ has been put in
the normal form \eqref{eq:normal-form}.

The parity shift is central.  In the bimodule model it is the shifted
diagonal, and
\[
 \Delta[1]\otimes^\mathbb L M\simeq M[1]
 \simeq M\otimes^\mathbb L\Delta[1].
\]
It is disjoint from $H_g$, since
\begin{equation}\label{eq:shift}
 \ch(X[1])=-\ch(X),
\end{equation}
whereas every $R_\rho$ preserves Chern characters.

\begin{lemma}\label{lem:vertex-normalization}
Let $E$ be a rank-one brane on an oriented non-separating curve $\zeta$.
If an enhanced autoequivalence $F$ fixes the Schmutz vertex of $\zeta$,
then
\[
 F(E)\simeq (E,\lambda)[\epsilon]
 \qquad\text{for some }\lambda\in\La^\times,
 \quad \epsilon\in\Z/2.
\]
Here $\lambda$ combines the Novikov weight of an isotopy from the image
support to $\zeta$ with the holonomy of its rank-one local system, and
$\epsilon$ records whether the chosen orientation of $\zeta$ has been
reversed.
\end{lemma}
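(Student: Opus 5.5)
Apply Theorem~\ref{thm:AS-input} to $F(E)$. Since $\Phi_F$ is an enhanced Morita autoequivalence, it preserves endomorphism algebras together with their pairing, so $F(E)$ is again spherical. It also acts on $\HH_0(\mathscr F_g)$ compatibly with traces, so $\ch(F(E))=F_*\ch(E)$. Since $F_*$ is invertible and $\ch(E)\neq0$ because $\zeta$ is non-separating, this gives $\ch(F(E))\neq0$. By the classification of spherical objects in \cite[Theorem~1.1]{AurouxSmith}, $F(E)$ is quasi-isomorphic to a rank-one brane $E'=(\zeta',\xi')$ on an embedded non-separating curve $\zeta'$, with some parity.

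Next I would identify the support. The class $[F(E)]$ in $\Upsilon(\mathscr F_g)$ is the image of the vertex of $\zeta$ under the action of $F$, which by hypothesis is fixed. Under the canonical isomorphism \eqref{eq:categorical-Schmutz}, which forgets local system, flux and parity, this means that $\zeta'$ is isotopic to $\zeta$ as an unoriented curve. I would argue this directly through the definition of the relation \eqref{eq:categorical-relation} and \cite[Lemma~7.6]{AurouxSmith}: two non-isotopic non-separating curves have distinct intersection-one neighbours, so they cannot define the same vertex.

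Then I would normalize the brane. Choose an isotopy $\zeta'_t$ from $\zeta'$ to $\zeta$ as unoriented curves. If it carries the orientation of $\zeta'$ to the opposite of the chosen orientation of $\zeta$, I compose with orientation reversal. For a rank-one brane on an embedded curve, reversing the orientation is absorbed by the parity: the dual brane on the reversed curve is the same object up to $[1]$, consistently with \eqref{eq:shift}, which reads $\ch(X[1])=-\ch(X)$, and with $\operatorname{OC}(\ch(\gamma,\xi))=\rk(\xi)[\gamma]$. This produces $\epsilon\in\Z/2$, recording the orientation mismatch together with any intrinsic parity of $E'$. Once orientations agree, the isotopy has some flux $s\in\mathbb R$ and the transported local system has holonomy $u\in U_\La$. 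By the description of the $\Gm$-torsor $\mathfrak B_\zeta$, based at the reference brane $E$, the resulting rank-one brane is $E(z_0)$ with $z_0=q^su$. Here I use \cite[Lemma~2.12]{AurouxSmith}: flux-zero isotopies do not change the object, so only $s$ and $u$ matter. Setting $\lambda=z_0$ and writing $(E,\lambda)$ for $E(\lambda)$ gives $F(E)\simeq(E,\lambda)[\epsilon]$.

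The step I expect to be the main obstacle is the orientation and parity bookkeeping. One has to check that the parity of the image brane and the orientation of its support can be combined into a single $\epsilon$, and that the chosen orientation of $\zeta$ is the one compared against. The cleanest check is through Chern characters: $\ch(F(E))=\pm\ch(E)$ up to the Novikov-independent class $[\zeta]$. The point is that $\operatorname{OC}\ch$ of any rank-one brane on $\zeta$ with either orientation and parity is $\pm[\zeta]$, and the sign is exactly $(-1)^\epsilon$. This pins down $\epsilon$ unambiguously, and no issue arises from $\lambda$.
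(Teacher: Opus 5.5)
Your proposal is correct and follows the same route as the paper. Theorem~\ref{thm:AS-input} places $F(E)$ on an embedded curve isotopic to $\zeta$ as an unoriented curve, the flux of that isotopy and the holonomy combine into $\lambda=q^su\in\La^\times$, and any orientation mismatch is absorbed by the shift $[1]$; you only make explicit what the paper compresses into that citation (preservation of sphericity and of $\ch\neq0$, and the vertex identification via \cite[Lemma~7.6]{AurouxSmith}), while your Chern-character sign check fixing $\epsilon$ is a harmless extra.
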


\begin{proof}
Theorem~\ref{thm:AS-input} represents $F(E)$ by a rank-one brane on an
embedded curve isotopic, as an unoriented curve, to $\zeta$.  Transport
through such an isotopy changes the brane by the exponential of its flux,
which is a Novikov factor, and by the holonomy of a flat rank-one local
system.  Their product is one element $\lambda\in\La^\times$.  With the
orientation fixed, these are all the remaining rank-one parameters.
Reversing the orientation of a one-dimensional brane changes its brane
parity.  In the two-periodic category this is the shift $[1]$.  This is
the geometric content of the rank-one family and balancing conventions in
\cite[Sections~2.2, 2.6 and 7.2]{AurouxSmith}.
\end{proof}

We now choose the finite configuration which will control the entire
kernel.  Let $\zeta_1,\ldots,\zeta_{2g}$ be oriented non-separating curves
with
\begin{equation*}
 i(\zeta_i,\zeta_j)=
 \begin{cases}
  1,&|i-j|=1,\\
  0,&|i-j|>1,
 \end{cases}
 \qquad
 [\zeta_i]\cdot[\zeta_{i+1}]=1.
\end{equation*}
Write $E_i$ for the brane with trivial rank-one local system on
$\zeta_i$, and let $\A_g$ be a minimal strictly unital model of the full
subcategory on these objects.

\begin{lemma}
The classes $[\zeta_1],\ldots,[\zeta_{2g}]$ form an integral basis of
$H_1(\Sigma_g;\Z)$, and $E_1,\ldots,E_{2g}$ split-generate
$\mathscr F_g$.  Consequently the inclusion
$\A_g\hookrightarrow\mathscr F_g$ is a Morita equivalence, and
\begin{equation}\label{eq:joint-evaluation}
 \bigoplus_{i=1}^{2g}\ev_{E_i}\colon
 \HH^1(\A_g)\longrightarrow
 \bigoplus_{i=1}^{2g}HF^1(E_i,E_i)
\end{equation}
is injective.
\end{lemma}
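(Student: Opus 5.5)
The lemma has three parts, and I will prove them in order: the homology basis, split-generation, and injectivity of the joint evaluation map.

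\emph{Homology basis.} By the intersection conditions, $\zeta_1,\ldots,\zeta_{2g}$ form a linear chain. A regular neighbourhood of $\zeta_1\cup\cdots\cup\zeta_{2g}$ is a surface of genus $g$ with one boundary component, which is the standard Humphries/Lickorish chain picture. The algebraic intersection matrix of the classes is tridiagonal, with $[\zeta_i]\cdot[\zeta_{i+1}]=1$ and zeros elsewhere apart from antisymmetry. Its determinant can be computed by the usual recursion $D_n=D_{n-2}$ for a tridiagonal antisymmetric matrix with unit off-diagonal entries, starting from $D_0=1$ and $D_2=1$. This gives $\det=1$ for $n=2g$ even. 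A family of $2g$ integral classes whose intersection matrix is unimodular is an integral basis of $H_1(\Sigma_g;\Z)$, since the intersection form is unimodular on a rank-$2g$ lattice. This step is routine.

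\emph{Split-generation.} I will first use that the chain fills: the complement of the neighbourhood is a disc. Then I will use the Humphries generation statement, namely that the Dehn twists $\tau_{\zeta_i}$, together with at most one additional twist along a curve meeting the chain (which can itself be written in terms of the chain objects via a twisted complex), generate $\Gamma_g$. The step I expect to be the main obstacle is the following. Seidel's exact triangle shows that $\tau_{\zeta_i}(Y)$ lies in the triangulated closure of $Y$ and $E_i$, using the rank-one local system on $\zeta_i$ with the appropriate parity. Hence the triangulated closure $\langle E_1,\ldots,E_{2g}\rangle$ is closed under the action of the chain twists, and so it contains every non-separating simple curve carrying a trivial local system.

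To handle nontrivial local systems and flux, I would argue differently. The split-closure of the chain contains $\tau_{\zeta}^{-1}\tau'_{\zeta}$-type cones, and more simply, for a second curve $\delta$ meeting $\zeta$ once, the surgery triangle $E_\zeta\to\mathrm{Cone}\to\ldots$ produces the rank-one branes $(\zeta,\lambda)$. Alternatively, and this is what I would try first, I would quote Abouzaid's generation theorem \cite{Abouzaid} or \cite[Section~7]{AurouxSmith}. In Abouzaid's form, a collection of curves whose complement is a union of discs split-generates. The filling chain complement is a disc, so this applies directly.

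\emph{Morita equivalence and injectivity.} Split-generation makes $\A_g\hookrightarrow\mathscr F_g$ a Morita equivalence, and therefore $\HH^1(\A_g)\cong\HH^1(\mathscr F_g)$ compatibly with evaluation to $HF^1(E_i,E_i)$. By \eqref{eq:HH-identification}, every class has the form $\operatorname{CO}(a)$ with $a\in H^1(\Sigma_g;\La)$, and by \eqref{eq:evaluation} its image is
\[
\bigl(\langle a,[\zeta_i]\rangle u_{E_i}\bigr)_{i=1}^{2g}.
\]
Each $u_{E_i}\neq0$. If all the components vanish, then $a$ pairs to zero with a basis of $H_1(\Sigma_g;\La)$, so $a=0$. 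Since $\operatorname{CO}$ is an isomorphism, the joint evaluation map is injective.
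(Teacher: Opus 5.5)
Your proposal is correct and follows the paper's proof. It uses the same determinant recursion and unimodularity argument for the integral basis, and cites a split-generation theorem for filling collections; the paper uses \cite[Proposition~2.15]{AurouxSmith}, which is in their Section~2, not Section~7. Injectivity then comes, as in the paper, from the $\operatorname{CO}$ isomorphism together with $\ev_{E_i}(a)=\langle a,[\zeta_i]\rangle u_i$.

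You should drop the Dehn-twist/surgery sketch rather than keep it as a fallback. As written, it only places certain rank-one branes on embedded non-separating curves in the closure, and says nothing about separating or immersed curves or higher-rank local systems. Also, surgering $\zeta$ with a curve meeting it once gives a curve in a different homology class, not $(\zeta,\lambda)$. The citation is what actually carries the split-generation step.
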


\begin{proof}
In the ordered family $[\zeta_1],\ldots,[\zeta_{2g}]$, the intersection
matrix is
\[
 J_{2g}=
 \begin{pmatrix}
 0&1&&&0\\
 -1&0&1&&\\
 &-1&0&\ddots&\\
 &&\ddots&\ddots&1\\
 0&&&-1&0
 \end{pmatrix}.
\]
Expanding successively along the first row and column gives
$\det J_{2g}=\det J_{2g-2}=\cdots=\det J_2=1$.  Since the intersection
form on $H_1(\Sigma_g;\Z)$ is unimodular of rank $2g$, the 
classes form an integral basis.  Split-generation is due to
\cite[Proposition~2.15]{AurouxSmith}.  Morita invariance and
\eqref{eq:HH-identification} identify $\HH^1(\A_g)$ with
$H^1(\Sigma_g;\La)$, while \eqref{eq:evaluation} becomes
\[
 \ev_{E_i}(a)=\langle a,[\zeta_i]\rangle u_i.
\]
Vanishing of all evaluations therefore means that $a$ pairs trivially
with an integral basis, and hence $a=0$.
\end{proof}

We next fix the enhanced model used below.  The notation for the action of
a derived Picard class and for the Taylor coefficients of a chosen
$A_\infty$-functor representative is the one fixed at the beginning of
this section.

\begin{proposition}\label{prop:models}
Let $F\in G_g$ and suppose that $F(E_i)\simeq E_i$ for every $i$.  After
these isomorphisms have been chosen, $F$ is represented on $\A_g$ by a
weakly invertible strictly unital $A_\infty$-functor
\[
 F=(F^0,F^1,F^2,\ldots)\colon\A_g\longrightarrow\A_g,
 \qquad F^0(E_i)=E_i.
\]
Two such functors represent the same element of $G_g$ if and only if they
are weakly equivalent.
\end{proposition}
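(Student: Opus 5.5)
The plan is to pass from the bimodule $M_F$ to an $A_\infty$-functor on the finite model $\A_g$ by standard Morita theory, and then use the chosen isomorphisms $F(E_i)\simeq E_i$ to make the functor fix objects strictly.

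\emph{Step 1: a functor $\A_g\to\mathscr F_g$.} Since the inclusion $\A_g\hookrightarrow\mathscr F_g$ is a Morita equivalence, restriction identifies $\DPic_\La(\mathscr F_g)$ with $\DPic_\La(\A_g)$. Convolution with $M_F$ sends each representable (Yoneda) module $\mathcal Y_{E_i}$ to a module quasi-isomorphic to $\mathcal Y_{F(E_i)}$. Composing the Yoneda embedding, the convolution functor $\Phi_F$, and a quasi-inverse of Yoneda on the essential image then gives an $A_\infty$-functor $\A_g\to\mathscr F_g$. This step uses the standard fact that an invertible perfect bimodule $M$ is quasi-isomorphic to a graph bimodule ${}_{\Phi}\Delta$ of a quasi-equivalence $\Phi$ of the split-closures \cite{KellerDG,Toen}.

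\emph{Step 2: fixing objects strictly.} We use the chosen isomorphisms $F(E_i)\simeq E_i$ in $H^0(\mathscr F_g)$. Transporting along them, by the usual lemma that a functor can be modified on objects along quasi-isomorphisms, produces a functor that is weakly equivalent to the previous one and satisfies $F^0(E_i)=E_i$. Its image then lies in the full subcategory on $E_1,\ldots,E_{2g}$, and hence, by homological perturbation, in the minimal model $\A_g$.

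\emph{Step 3: strict units and invertibility.} Seidel's strictification result lets us replace this functor by a strictly unital one in the same weak-equivalence class. Weak invertibility follows because $M_F$ is invertible: $F^1$ is a quasi-isomorphism on each $\hom(E_i,E_j)$, and on a minimal model a functor with $F^1$ bijective is weakly invertible.

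\emph{Step 4: the ``if and only if'' clause.} If $F$ and $F'$ are weakly equivalent, their graph bimodules are quasi-isomorphic, so they give the same class in $G_g$. Conversely, suppose the graph bimodules ${}_F\Delta$ and ${}_{F'}\Delta$ are quasi-isomorphic. Restricting along the Yoneda embedding, this gives a natural transformation $T\in\hom_{\mathrm{fun}}(F,F')$ whose components $T^0_{E_i}$ are isomorphisms. Such a $T$ is a natural quasi-isomorphism, so $F$ and $F'$ are weakly equivalent in the sense of \cite{SeidelBook}.

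The main obstacle is one subtlety in Step 4: the functors are normalized by chosen isomorphisms $F(E_i)\simeq E_i$, and different choices differ by automorphisms of the $E_i$. The statement says ``after these isomorphisms have been chosen'', so I would fix them once. I would also note that a quasi-isomorphism of bimodules supplies its own components $T^0_{E_i}\in HF^0(E_i,E_i)^\times$, which absorb any discrepancy. Weak equivalence in the functor category permits exactly this, so no rigidity input is needed at this stage.
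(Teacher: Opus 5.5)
Your proposal is correct and follows essentially the same route as the paper. Both arguments transport the bimodule to $\A_g$ by Morita invariance and use the hypothesis that convolution sends representables to representables. Both then pass from the resulting quasi-representable bimodule to an object-fixing, strictly unital functor, and use the Yoneda embedding to identify weak equivalence of functors with isomorphism of bimodules. The only difference is packaging: the paper cites Opper's internal-Hom description and object-preserving strictification \cite[Theorem~4.2 and Lemma~4.4]{OpperSurface} in one step, whereas you obtain your Steps~2--3 from the standard tools of transport along quasi-isomorphisms, homological perturbation, and Seidel's strictification.
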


\begin{proof}
Morita invariance transports an invertible bimodule across
$\A_g\hookrightarrow\mathscr F_g$.  Convolution with the transported
bimodule sends a representable module $h_{E_i}$ to the module represented
by the image of $E_i$.  Under the hypothesis, these image modules are
again represented by objects of the finite subcategory.  The internal
Hom description by $A_\infty$-functors and the object-preserving
strictification in \cite[Theorem~4.2 and Lemma~4.4]{OpperSurface} then
give the   functor.  The $A_\infty$ Yoneda embedding identifies
weak equivalences of functors with isomorphisms of their associated
right quasi-representable bimodules, which is precisely equality of the
corresponding classes in the derived Picard group.  See also
\cite{COS} for the internal Hom comparison.
\end{proof}

\section{Rigidity on the generating chain}\label{sec:rigidity}
In this section, we show that an autoequivalence fixing the objects
$E_i$ has only two possible enhanced classes. To achieve this, we first
analyze its action on the rank-one deformation of each object, and then
compare the resulting families through their relative Floer complex.

\begin{lemma}\label{lem:incidence}
Let $k$ be an algebraically closed field of characteristic zero, and
let $C$ be a perfect two-periodic complex over
$k[z^{\pm1},w^{\pm1}]$. Suppose that the points where the derived
fibre of $C$ is nonzero give a bijective correspondence between the two
copies of $k^\times$, and that every nonzero fibre has cohomology of
dimension one in each parity. The reduced support of $C$ is the graph
of $w=cz^\varepsilon$ for some $c\in k^\times$ and
$\varepsilon\in\{1,-1\}$.
\end{lemma}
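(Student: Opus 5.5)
The plan is to identify the reduced support as a closed subvariety of the torus $T=\Gm\times\Gm=\operatorname{Spec}k[z^{\pm1},w^{\pm1}]$, show that it is an irreducible curve mapping isomorphically onto both factors, and then use the fact that the automorphisms of the variety $\Gm$ are exactly the maps $z\mapsto cz^{\pm1}$.

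\emph{Support.} Since $C$ is perfect, the support $Z=\{x:\ C\otimes^{\mathbb L}k(x)\ne0\}$ is Zariski closed; by derived Nakayama it equals the union of the supports of the cohomology modules. Because $k$ is algebraically closed, $Z$ is determined by its $k$-points. By hypothesis these $k$-points form the graph $\{(z,\phi(z))\}$ of a bijection $\phi\colon k^\times\to k^\times$. Give $Z$ its reduced structure.

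\emph{$Z$ is a curve and projects isomorphically.} $Z$ is not all of $T$, since a vertical line $\{z_0\}\times k^\times$ meets $Z$ in one point. Also $Z$ has no isolated points, since its projection to the $z$-line is onto the infinite set $k^\times$. Hence $Z$ has pure dimension one. Its projection $p_1\colon Z\to\Gm$ is bijective on $k$-points. It is also finite: the fibres are finite, and the map is quasi-finite. Properness over $\Gm$ follows because a component of $Z$ cannot escape to $w=0$ or $w=\infty$ over a point of $\Gm$. Otherwise bijectivity would fail over a neighbouring point, since a component dominating the $z$-line covers all but finitely many points, and the hypothesis is symmetric in $z$ and $w$.

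\emph{Main obstacle: from bijection to isomorphism.} A bijective finite morphism onto the normal curve $\Gm$ in characteristic zero is birational, because separable degree equals degree. Still, $Z$ could be a cuspidal curve rather than a smooth one. This is where the fibre hypothesis comes in. I would first try the following argument: the derived fibre at each point of $Z$ has total dimension $2$, which is exactly what a rank-one module on a smooth curve produces (its Koszul fibre in codimension one). If $Z$ were singular at a point, or if $C$ had higher rank along $Z$, then the fibre dimension would jump. Here one uses upper semicontinuity of fibre dimension and the local Koszul computation $\operatorname{Tor}$ over the regular two-dimensional ring $T$.

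Alternatively, one can avoid singularities entirely by using the symmetric hypothesis. The inverse of $\phi$ is also given by a finite birational map $Z\to\Gm$. Then $\phi\colon\mathbb A^1\setminus0\to\mathbb A^1\setminus0$ is a bijective morphism between normalizations, hence an isomorphism by Zariski's main theorem. Once $\phi$ is a regular automorphism of $\Gm$, it is an invertible Laurent polynomial, hence a unit $cz^{\varepsilon}$ of $k[z^{\pm1}]$ with $\varepsilon=\pm1$, since it must be bijective. Thus $Z$ equals the closed graph $w=cz^\varepsilon$, because its $k$-points agree and both are reduced.
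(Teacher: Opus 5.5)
There is a genuine gap at the sentence ``$Z$ has no isolated points, since its projection to the $z$-line is onto the infinite set $k^\times$.'' Surjectivity does not exclude a curve together with finitely many extra points. The set-theoretic bijection hypothesis alone does not imply the lemma. For $a\in k^\times$ let $Z_1=\{w=z-a\}\subset\Gm\times\Gm$. It projects injectively onto $k^\times\setminus\{a\}$ and onto $k^\times\setminus\{-a\}$. So the $k$-points of $Z=Z_1\cup\{(a,-a)\}$ form the graph of a bijection of $k^\times$, yet $Z$ is not of the form $w=cz^{\pm1}$.

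The same example refutes your properness sentence. $Z_1$ does escape to $w=0$ over $z=a$, and the isolated point restores bijectivity exactly there, so nothing fails ``over a neighbouring point''. The example is realized by the two-periodic folding of $\mathcal O_{Z_1}\oplus k(a,-a)$. The only hypothesis it violates is the fibre condition: by the Koszul resolution, its fibre at $(a,-a)$ has dimension $2$ in each parity. So the fibre hypothesis must be spent on excluding isolated points. Your ``alternative'' route never uses it, and by this example it cannot be enough.

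The paper uses the fibre hypothesis as follows. At a point $p$ of the support, a minimal model of $C$ over the regular local domain $\mathcal O_{T,p}$ has one generator in each parity, by the $(1,1)$ hypothesis. Its differentials are $f,h\in\mathfrak m_p$ with $fh=0$. So one of them vanishes, but not both, since otherwise the support would be all of $T$. Near $p$ the support is therefore the divisor $V(f)$ or $V(h)$, which gives pure dimension one. Irreducibility then follows because there are no vertical or horizontal components and at most one component dominates the $z$-line. You never establish irreducibility explicitly, although your normalization argument presupposes it.

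Conversely, the use you propose for the fibre hypothesis, detecting singular points of $Z$, cannot work. For any $g\in A=k[z^{\pm1},w^{\pm1}]$, the complex $A\xrightarrow{g}A$ has $(1,1)$-dimensional fibres at every point of $V(g)$, singular or not.

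Singularities need no extra input. Once $Z$ is an integral curve with bijective first projection, characteristic zero makes that projection birational. So on $Z$ one has $w=p(z)/q(z)$ in lowest terms. The relation $q(z)w=p(z)$ shows there is no point of $Z$ over a root of $q$ in $k^\times$, and likewise none over a root of $p$, since $w\neq0$ on $Z$. Surjectivity then forces $w=cz^n$, and bijectivity forces $n=\pm1$. Equivalently, the closure of $Z$ in $\Gm\times\mathbb P^1$ is finite and birational over the normal curve $\Gm$, hence isomorphic to it. That is the correct form of your finiteness claim, and it needs irreducibility first.
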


\begin{proof}
The support is closed. Indeed, acyclicity of a finite projective
periodic complex is open, as can be seen from the ranks of its
differential matrices. Let $p$ be a closed point of the support.
After localizing at $p$ and cancelling contractible summands, the
complex has one generator in each parity. Its differentials are
multiplication by $f,h\in\mathfrak m_p$ with $fh=0$. The local ring is a
domain, so one of $f,h$ is zero. They cannot both vanish, since the
support would then contain an open subset of the two-dimensional
torus. Hence the reduced support is locally a divisor and has no
isolated points.

Every component dominates both coordinate factors, since a vertical
or horizontal component would violate uniqueness of a fibre. Two
components dominating the first factor would give two points above
a general $z$. Thus the support is an integral curve $Z$. Its map to
the first factor is generically one-to-one. In characteristic zero the
extension of function fields is separable, and hence $k(Z)=k(z)$.

Write $w=p(z)/q(z)$ in lowest terms. If $q(a)=0$ for some
$a\in k^\times$, then $p(a)\ne0$ and the identity
$q(z)w=p(z)$ would rule out a point of $Z$ over $a$. This contradicts
surjectivity onto the first factor. Thus $w\in k[z,z^{-1}]$. The same
argument applied to $w^{-1}$ shows that $w$ is a unit in this Laurent
polynomial ring, which implies that $w=cz^n$. Its bijectivity on closed
points gives $n=1$ or $n=-1$.
\end{proof}

\begin{lemma}\label{lem:rankone-hf}
Let $v,w\in\La^\times$.  For the orbit family of a
non-separating curve $\delta$, one has
\[
 HF^*(X_\delta(v),X_\delta(w))
 \cong
 \begin{cases}
 H^*(S^1;\La),&v=w,\\
 0,&v\ne w.
 \end{cases}
\]
\end{lemma}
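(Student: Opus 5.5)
We prove the lemma by reducing both branes to a common support and computing the Floer complex with the standard two-point (Morse-type) model.

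\emph{Reduction to a common support.} Write $v=q^{s}u$ and $w=q^{t}u'$ as in \eqref{eq:factorization}. By the description of the fibres of $\mathcal U_\delta$, the brane $X_\delta(v)$ is the image of $X_\delta$ under an isotopy of flux $s$, with unitary holonomy $u$. Likewise, $X_\delta(w)$ has flux $t$ and holonomy $u'$. Since quasi-isomorphisms are induced by flux-zero isotopies \cite[Lemma~2.12]{AurouxSmith}, we may choose representatives as follows. The support of $X_\delta(v)$ is a fixed curve $\delta_0$. The support of $X_\delta(w)$ is a small pushoff $\delta_1$ of $\delta_0$, which we arrange to meet $\delta_0$ transversely in exactly two points $p,q'$. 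These two curves cobound two thin bigons $B_+$ and $B_-$. Their areas satisfy
\[
 \operatorname{area}(B_+)-\operatorname{area}(B_-)=t-s,
\]
because the difference of the two areas is the signed area swept between the curves. This identity is the relative flux, up to the sign convention fixed by the orientation.

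\emph{The Floer complex.} The complex $CF^*$ has rank two: one generator in even degree and one in odd degree, so the Floer differential $\mu^1$ is a $1\times1$ matrix in the relevant degree. Because $\delta$ is non-separating and essential, the only immersed bigons are $B_\pm$. These two bigons contribute with opposite orientation signs and with the parallel-transport factors of the two rank-one local systems along their boundary arcs. After normalising the trivialisations and absorbing a common factor, we obtain
\[
 \mu^1=\pm\bigl(q^{\operatorname{area}(B_+)}\,\alpha-q^{\operatorname{area}(B_-)}\,\beta\bigr),
\]
where $\alpha,\beta$ are the holonomy contributions. Their ratio $\alpha/\beta$ is $u'/u$, or its inverse, depending on orientation. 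The differential therefore vanishes exactly when
\[
 q^{t-s}\,u'u^{-1}=1,
\]
that is, when $w/v=1$.

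\emph{Conclusion in each case.} If $v=w$, the differential is zero and the cohomology has rank two, one class in each parity. We then identify it with $H^*(S^1;\La)$. The cleanest way is to observe that $X_\delta(v)$ and $X_\delta(w)$ are then quasi-isomorphic, so the group is $HF^*(X,X)$ for an embedded spherical brane, which is $H^*(S^1;\La)$. If $v\neq w$, the differential is an isomorphism and the cohomology vanishes.

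\emph{Main obstacle.} The hard step is to get the bookkeeping of signs and exponents right, so that vanishing occurs exactly when $z$ matches. This requires the spin-structure sign to make the two bigons cancel rather than add, and the flux and holonomy to combine into the single parameter $z=q^{s}u$, not into, say, $q^{s}u^{-1}$. I would handle this by comparing with the Kodaira--Spencer class
\[
 \operatorname{KS}(\mathcal U_\delta)=u_\delta\otimes d\log z.
\]
At $w=v$ the differential must vanish, since $X\simeq X$, and its first-order variation in $w$ must be nonzero. Together these fix the coordinate, up to the normalisation already built into $\mathfrak B_\delta$.

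As an alternative to the explicit bigon count, one can argue via \eqref{eq:intersection}. That formula gives rank $0$ or $2$, since the geometric self-intersection number is $0$ and the Euler characteristic, being the intersection number of $[\delta]$ with itself, is $0$. The vanishing locus in $w$ is then closed, and by the explicit two-term formula above it is the single point $w=v$.
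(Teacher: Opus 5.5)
Your bigon count is a valid, more hands-on route, but as written it only covers $|\operatorname{val}(w/v)|<1$. The lemma is needed for all $v,w$: arbitrary real fluxes occur by \cite[Lemma~2.10]{AurouxSmith}, and Proposition~\ref{prop:family} uses every $w$. The configuration you ask for cannot exist when $|t-s|\ge1$. Suppose two isotopic essential embedded curves meet transversely in exactly two points. Then $B_\pm$ are embedded discs with disjoint interiors: an immersed disc with embedded boundary is embedded, and neither disc can contain the remaining arcs because the curves are essential. Hence $\operatorname{area}(B_+)+\operatorname{area}(B_-)<1$. Relative flux is invariant under the flux-zero isotopies you use to choose representatives; it is well defined for $g\ge2$, since every map $T^2\to\Sigma_g$ has degree zero. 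So no choice of representatives realizes relative flux $t-s$ with $|t-s|\ge1$ by two bigons. The curve $\delta_1$ must then meet $\delta_0$ in more points, and your two-term formula for $\mu^1$ is no longer available.

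Your alternative does not repair this. \eqref{eq:intersection} is a statement about non-isotopic curves; read literally for isotopic ones it would give rank $0$ even at $v=w$. Its last step also relies on the two-term formula again. Separately, your worry about $q^su$ versus $q^su^{-1}$ is moot. Since $\alpha,\beta\in U_\La$, the equation $q^{A_+}\alpha=q^{A_-}\beta$ forces $A_+=A_-$ and $\alpha=\beta$ separately. So only the relative sign of the two bigons matters, and that sign is fixed by $HF^*(X,X)\ne0$.

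The paper sidesteps explicit models for large flux. It normalizes $v=1$ by the $\Gm$-action and shows, by a zero-flux argument, that the Dehn twist about $\delta$ fixes every $X_\delta(z)$. The twist triangle plus Krull--Schmidt then gives $HF^*(X,X_\delta(z))\ne0\iff X_\delta(z)\simeq X$. Hence the nonvanishing set $S$ is a subgroup of $\La^\times$, and perfectness of $\mathcal U_\delta$ makes it Zariski closed. Only the unitary slice is computed by hand (same support, cellular complex with differential $u-1$), giving $S\cap U_\La=\{1\}$. So $S$ is a proper closed subgroup, hence finite, hence consists of roots of unity; these lie in $U_\La$, so $S=\{1\}$.

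Your small-flux computation could replace the unitary computation there. To reach arbitrary flux, however, you still need one of two further arguments. One is this subgroup-plus-closedness argument. The other is to pass to the annular cover of $\delta$. There the closed lifts admit a two-bigon model with any area difference, and the summands of the Floer complex coming from non-closed lifts are acyclic.
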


\begin{proof}
Let $\Phi_z$ denote the rational $\Gm$-action defining the family, so
that $X_\delta(z)=\Phi_z(X)$, where $X=X_\delta(1)$.  Applying $\Phi_{v^{-1}}$ gives
\[
 HF^*(X_\delta(v),X_\delta(w))
 \cong HF^*(X,X_\delta(w/v)).
\]
It therefore suffices to determine those $z$ for which
$HF^*(X,X_\delta(z))$ is nonzero. By rebasing the orbit at a unitary
parameter, we may assume that $X$ carries the trivial local system.
This multiplies
both parameters by a common unit and does not change
their ratio.

Let $\tau_\delta$ be the symplectic Dehn twist associated
to $X$.  We first show that
\begin{equation}\label{eq:twist-fix}
 T_X(X_\delta(z))\simeq X_\delta(z)
 \qquad\text{for every }z\in\La^\times.
\end{equation}
Here $T_X$ is the spherical twist, identified with the
geometric Dehn twist by the exact triangle recalled in
\cite[Section~2.3]{AurouxSmith}.

Indeed, let $\delta_t$ be an oriented isotopy from
$\delta$ to the support of $X_\delta(z)$.
The isotopy $\tau_\delta(\delta_t)$ sweeps the same
signed area as $\delta_t$, since $\tau_\delta$ is
symplectic.  Moreover, $\tau_\delta$ preserves $\delta$
and its orientation.  Concatenating the reverse of
$\delta_t$ with $\tau_\delta(\delta_t)$ therefore gives
an isotopy from the support of $X_\delta(z)$ to its
image under $\tau_\delta$ which sweeps zero area.
A reparametrization along $\delta$ at the joining point
contributes no area.  The local system and spin structure
are preserved under this transport.  The Dehn twist
lift has no parity shift on its vanishing cycle, and
the same holds throughout its oriented isotopy class.
Thus \cite[Lemma~2.12]{AurouxSmith} proves
\eqref{eq:twist-fix}. 

Set $Y=X_\delta(z)$ and $V=HF^*(X,Y)$. Consider  the twist
triangle
\[
 V\otimes_\La X \longrightarrow Y
 \xrightarrow{\eta} T_X(Y)
 \longrightarrow (V\otimes_\La X)[1].
\]
Choose an isomorphism $T_X(Y)\simeq Y$ using
\eqref{eq:twist-fix}.  Since
$HF^0(Y,Y)=\La e_Y$, the resulting endomorphism
$\eta$ of $Y$ is either zero or invertible.
If it is invertible, then $V\otimes_\La X=0$ and
hence $V=0$.  If it is zero, the triangle splits to give
\[
 V\otimes_\La X\simeq Y\oplus Y[-1].
\]
The cohomological category is Hom-finite and
idempotent complete, so it is Krull--Schmidt.
Both $X$ and $Y$ are indecomposable, and
$V\otimes_\La X$ is a finite direct sum of copies
of $X$ and $X[1]$.  Consequently
$Y\simeq X$ or $Y\simeq X[1]$.
The latter possibility is excluded by
\[
 \ch(Y)=\ch(X)\ne0,
 \qquad
 \ch(X[1])=-\ch(X).
\]
Hence
\begin{equation}\label{eq:stabilizer}
 HF^*(X,X_\delta(z))\ne0
 \quad\Longleftrightarrow\quad
 X_\delta(z)\simeq X.
\end{equation}

Recall that $R=\La[z,z^{-1}]$ and consider
\[
 \mathcal H=
 \operatorname{RHom}_{\mathscr F_g\otimes_\La R}
 \bigl(X\otimes_\La R,\mathcal U_\delta\bigr).
\]
The orbit family is perfect by its construction in
\cite[Proposition~6.6 and Section~7.2]{AurouxSmith}.
Properness of $\mathscr F_g$ implies that $\mathcal H$
is a perfect two-periodic $R$-complex.  Its derived
fibre at $z$ computes $HF^*(X,X_\delta(z))$.
Hence
\[
 S=\{z\in\La^\times\mid HF^*(X,X_\delta(z))\ne0\}
\]
is Zariski closed in $\Gm$.  By
\eqref{eq:stabilizer}, it is also a
subgroup of $\La^\times$.

For $u\in U_\La$, the branes $X$ and $X_\delta(u)$
have the same support and differ only by the
rank-one local system of holonomy $u$.  Their Floer
cohomology is therefore
\[
 HF^*(X,X_\delta(u))
 \cong H^*(S^1;\La_u),
\]
where $\La_u$ denotes that local system on the circle.
Its cellular cochain complex has one generator in
each degree and differential multiplication by $u-1$.
It is acyclic unless $u=1$.  Thus
\(
 S\cap U_\La=\{1\}\). In particular $S$ is a proper closed subset of
$\Gm$, and is therefore finite.  Every element
$z\in S$ has finite order.  If $z^m=1$, then
$m\,\operatorname{val}(z)=0$, so $z\in U_\La$.
It follows that $S=\{1\}$.

Finally $HF^*(X,X)=H^*(S^1;\La)$, and applying the
initial reduction with $z=w/v$ proves the assertion.
\end{proof}

\begin{proposition}\label{prop:family}
Suppose that $F\in G_g$ sends the Schmutz vertex of an oriented curve
$\gamma$ to that of an oriented curve $\delta$. Equip the target brane family with the parity of $F(X_\gamma(1))$. Then $F$ induces an
algebraic isomorphism
\[
 \phi_{F,\gamma}\colon\mathfrak B_\gamma
 \xrightarrow{\sim}\mathfrak B_\delta.
\]
In algebraic coordinates it has the form
\begin{equation*}
 z\longmapsto cz^\varepsilon,
 \qquad c\in\La^\times,\quad \varepsilon\in\{1,-1\}.
\end{equation*}
If the target coordinate is based at
$F(X_\gamma(1))\simeq X_\delta(1)$, then $c=1$ and
\begin{equation}\label{eq:family-derivative}
 F_*(u_\gamma)=\varepsilon u_\delta.
\end{equation}
\end{proposition}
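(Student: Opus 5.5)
Since $F$ maps the Schmutz vertex of $\gamma$ to that of $\delta$, Lemma~\ref{lem:vertex-normalization} (applied after composing with a fixed identification) shows that each $F(X_\gamma(z_0))$ is a rank-one brane on a curve isotopic to $\delta$, with a definite parity. Each such brane is an element of $\mathfrak B_\delta$ for the parity fixed by $F(X_\gamma(1))$. The parity is constant in $z_0$: $F(X_\gamma(z_0))$ and $F(X_\gamma(1))$ have Chern characters $F_*\ch(X_\gamma)$, and by \eqref{eq:shift} a parity change would flip the sign. This gives a set map $\phi=\phi_{F,\gamma}\colon\La^\times\to\La^\times$ in the chosen coordinates. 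It is injective because $F$ preserves Floer cohomology: $X_\gamma(v)\not\simeq X_\gamma(w)$ for $v\neq w$ by Lemma~\ref{lem:rankone-hf}. Applying the same argument to $F^{-1}$ shows $\phi$ is bijective.

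To show algebraicity, I would form the relative Floer complex of the two families. Convolve the family $\mathcal U_\gamma\in\Perf(\mathscr F_g\otimes_\La R)$ with the bimodule $M_F$ to get a perfect family $F(\mathcal U_\gamma)$ over $\La[z^{\pm1}]$. Then set
\[
 C=\operatorname{RHom}_{\mathscr F_g\otimes\La[z^{\pm1},w^{\pm1}]}\bigl(F(\mathcal U_\gamma)\boxtimes\La[w^{\pm1}],\ \La[z^{\pm1}]\boxtimes\mathcal U_\delta\bigr),
\]
which is perfect and two-periodic by properness of $\mathscr F_g$. Base change identifies its derived fibre at $(z_0,w_0)$ with $HF^*(F(X_\gamma(z_0)),X_\delta(w_0))$. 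By Lemma~\ref{lem:rankone-hf} this fibre is $H^*(S^1;\La)$ when $w_0=\phi(z_0)$ and zero otherwise. So its support is the graph of the bijection $\phi$, and each nonzero fibre has one-dimensional cohomology in each parity. Lemma~\ref{lem:incidence} then gives $\phi(z)=cz^\varepsilon$.

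For the normalization, rebasing the target coordinate at $F(X_\gamma(1))$ makes $\phi(1)=1$, so $c=1$. For the derivative, $F(\mathcal U_\gamma)$ and the pullback $\phi^*\mathcal U_\delta$ are both families over $\Gm$ whose fibres agree pointwise, and the support computation says $C$ concentrates on the graph. I would deduce that $F(\mathcal U_\gamma)\simeq\phi^*\mathcal U_\delta\otimes\mathcal L$ for a line bundle $\mathcal L$ on $\Gm$. One route is to restrict $C$ to the graph and note that its degree-zero part is a rank-one module giving a fibrewise-invertible map of families. Kodaira--Spencer classes are natural under functors and under pullback, and they do not change when twisting by a line bundle. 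So
\[
 F_*\operatorname{KS}(\mathcal U_\gamma)=\operatorname{KS}(\phi^*\mathcal U_\delta)=u_\delta\otimes\phi^*d\log w=\varepsilon\,u_\delta\otimes d\log z,
\]
and comparing with $\operatorname{KS}(\mathcal U_\gamma)=u_\gamma\otimes d\log z$ gives \eqref{eq:family-derivative}.

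I expect the main obstacle to be this last step: upgrading the pointwise identification to an isomorphism of families near $z=1$, so that first-order deformations are comparable. If the global statement is awkward, I would first try working over the formal neighbourhood $\La[[z-1]]$. There, rigidity of $X_\delta(1)$ (one-dimensional $HF^0$) lets the fibre isomorphism at $z=1$ lift order by order to the first-order thickening. That suffices for the Kodaira--Spencer comparison.
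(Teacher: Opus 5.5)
Your proposal is correct and follows the paper's proof essentially step for step. The shared route is: the relative Floer complex over $\La[z^{\pm1},w^{\pm1}]$; Lemma~\ref{lem:rankone-hf}, together with the same argument for $F^{-1}$, to get a bijective support with $(1,1)$-dimensional fibres; Lemma~\ref{lem:incidence} to get $w=cz^{\varepsilon}$; and then restriction to the graph, where the degree-zero part is a line bundle commuting with base change, to produce an isomorphism of families near $z=1$ and compare Kodaira--Spencer classes.

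One caution about your formal-neighbourhood fallback: it is circular as stated. One-dimensionality of $HF^0$ only makes a lift unique; it does not make it exist. The obstruction to lifting the fibre isomorphism to first order is exactly the difference of the two Kodaira--Spencer classes in $HF^1$, which is what you are trying to compute. So that route still needs the graph-restriction argument. The paper closes that argument by noting that the extended morphism's cone has vanishing fibre at $1$ and is killed near $1$ by testing against the generating chain.
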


\begin{proof}
Represent $F$ by an invertible bimodule and set
$\mathcal V=F(\mathcal U_\gamma)$. The resulting family $\mathcal V$ is
perfect over $\La[z,z^{-1}]$, with fibre at $z$ given by
$F(X_\gamma(z))$. Consider the relative Floer complex
\[
C=\operatorname{RHom}_{\mathscr F_g\otimes S}
       (\mathcal V_z,\mathcal U_{\delta,w}).
\]
 over
$S=\La[z^{\pm1},w^{\pm1}]$. By \cite[Lemma~2.18]{AurouxSmith}, $C$ is a perfect two-periodic $S$-complex that commutes with derived
base change, whose fibre at $(z,w)$ is
$HF^*(F(X_\gamma(z)),X_\delta(w))$.

The Chern character of $F(X_\gamma(z))$ is independent of $z$ and is
nonzero. Its parity relative to the oriented curve $\delta$ is
therefore constant. By the geometricity theorem recalled in Section 2,
each fibre is a rank-one brane in the selected target family. By Lemma~\ref{lem:rankone-hf}, for every $z$ there is exactly one $w$ at which the fibre of
$C$ is nonzero, and that fibre has dimensions $(1,1)$. Applying the
same argument to $F^{-1}$ gives exactly one $z$ for each $w$.
Lemma~\ref{lem:incidence} shows that the reduced support is
$w=cz^\varepsilon$, with $\varepsilon=\pm1$. In particular the action
on rank-one brane parameters is regular and invertible. Normalizing the target family at $F(X_\gamma(1))$ gives $c=1$.

It remains to justify differentiation of this formula. Restrict to
the graph and set
$\mathcal W_z=\mathcal U_\delta(z^\varepsilon)$. In the local
one-generator-per-parity model used in the lemma, the differentials
vanish after restriction to the reduced graph. Near $z=1$ the
degree-zero part of $\operatorname{RHom}(\mathcal V,\mathcal W)$ is
therefore a line bundle and commutes with base change. The
isomorphism of the fibres at $1$ extends to a morphism of families
on a neighbourhood of $1$. Its cone has vanishing fibre at $1$. Testing
the cone against the finite split-generating chain and shrinking this
neighbourhood shows that the cone vanishes. Thus we obtain an isomorphism
of families near the base point.

The Kodaira--Spencer class of $\mathcal U_\gamma$ at $1$ is
$u_\gamma\otimes d\log z$, as recorded in Section 2. Then, functoriality
under convolution and the local isomorphism of families give
\[
 F_*(u_\gamma)\otimes d\log z
   =u_\delta\otimes d\log(z^\varepsilon)
   =\varepsilon u_\delta\otimes d\log z.
\]
Comparing coefficients gives \eqref{eq:family-derivative}.
\end{proof}

We now classify the autoequivalences which fix the generating
configuration.  The following normalization prevents a hidden character
factor from entering the comparison with the negative sign.

\begin{lemma}\label{lem:hyperelliptic-normalization}
Let $\iota_g$ be the hyperelliptic involution in the standard
double-cover picture of the generating chain. There is a character
$\eta\in H_g$ such that
\[
 \kappa_g=R_\eta\PiShift s_g(\iota_g)^{-1}
\]
fixes every $E_i$ and sends the rank-one parameter on $E_i$ to its
inverse. Moreover, $\kappa_g^2=1$.
\end{lemma}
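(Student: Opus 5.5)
The plan is to use the standard double-cover picture: $\Sigma_g\to S^2$ branched at $2g+2$ points, with the chain $\zeta_1,\ldots,\zeta_{2g}$ the lifts of arcs joining consecutive branch points. In this picture $\iota_g$ preserves each $\zeta_i$ setwise and reverses its orientation. I would proceed in three steps.

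\textbf{Step 1: effect of $s_g(\iota_g)^{-1}$ on the $E_i$.} Choose the balanced symplectic representative of $\iota_g$ to be an actual involution preserving $\omega$ and each $\zeta_i$ as a set. This is possible by averaging the area form, and balancing then holds up to a Hamiltonian correction. Since $\iota_g$ reverses the orientation of $\zeta_i$, the image $s_g(\iota_g)^{-1}(E_i)$ is a rank-one brane on $\zeta_i$ with reversed orientation and possibly nontrivial holonomy/flux parameter. By Lemma~\ref{lem:vertex-normalization} and the orientation-reversal discussion ($z\mapsto z^{-1}$, parity change), we get
\[
 s_g(\iota_g)^{-1}(E_i)\simeq(E_i,\lambda_i)[1].
\]
Applying $\PiShift$ removes the shift. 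Because $[\zeta_1],\ldots,[\zeta_{2g}]$ is an integral basis, there is a unique $\eta\in H_g$ with $\eta([\zeta_i])=\lambda_i^{-1}$. Then $R_\eta\PiShift s_g(\iota_g)^{-1}$ fixes every $E_i$.

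\textbf{Step 2: inversion on the rank-one parameter.} Apply Proposition~\ref{prop:family} to $\kappa_g$ with $\gamma=\delta=\zeta_i$, based so that $c=1$. It suffices to show $\varepsilon=-1$. The factors $R_\eta$ and $\PiShift$ act on $\mathfrak B_{\zeta_i}$ by translation and trivially respectively, so they do not affect the exponent. For $s_g(\iota_g)^{-1}$, the brane $X_{\zeta_i}(q^su)$ is sent to a brane whose support is swept with flux $s$ relative to the reversed orientation and whose holonomy $u$ is read around the reversed circle. In the coordinate of the original orientation this is $z\mapsto z^{-1}$ (up to the constant absorbed by $\eta$), so $\varepsilon=-1$. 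Equivalently, the induced map on $HF^1(E_i,E_i)$ sends $u_i$ to $-u_i$, since $\iota_g^*$ acts by $-1$ on $H^1$ and \eqref{eq:evaluation} holds.

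\textbf{Step 3: $\kappa_g^2=1$.} First, $\PiShift$ is central and $\PiShift^2=1$. Second, $s_g$ is a homomorphism with $\iota_g^2=1$. Third, by \eqref{eq:conjugation}, $s_g(\iota_g)^{-1}R_\eta s_g(\iota_g)=R_{\iota_g\cdot\eta}=R_{\eta^{-1}}$, because $\iota_{g*}=-1$ on $H_1$. Combining these,
\[
 \kappa_g^2=R_\eta\,s_g(\iota_g)^{-1}R_\eta\,s_g(\iota_g)^{-1}=R_\eta R_{\eta^{-1}}\,s_g(\iota_g^{-2})=1.
\]
This holds with no constraint on $\eta$.

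\textbf{Main obstacle.} The delicate point is Step 1: identifying $s_g(\iota_g)^{-1}(E_i)$ with a reversed-orientation brane on the same support, and confirming that the orientation reversal contributes exactly one shift rather than being absorbed elsewhere. I would handle this by choosing the involutive balanced representative explicitly and invoking the parity convention of Lemma~\ref{lem:vertex-normalization}. Even if the shift bookkeeping were off, the $\ch$ argument \eqref{eq:shift} would detect it: $\iota_{g*}[\zeta_i]=-[\zeta_i]$ forces a sign in $\ch$, which only $[1]$ can cancel.
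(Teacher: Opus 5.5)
Your proposal is correct and follows the paper's proof essentially step for step. In both, the deck involution reverses each $\zeta_i$ and $\PiShift$ restores the parity. The unique character $\eta$ on the integral basis $[\zeta_i]$ then removes the residual parameters $\lambda_i$. Finally, $\kappa_g^2=1$ follows from centrality of $\PiShift$, $\iota_g^2=1$ and $s_g(\iota_g)^{-1}R_\eta s_g(\iota_g)=R_{\eta^{-1}}$.

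The paper phrases the inversion of the rank-one parameter through $\kappa_gR_\rho\kappa_g^{-1}=R_{\rho^{-1}}$ together with $\kappa_g(E_i)\simeq E_i$. That is the same content as your flux/holonomy bookkeeping.

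Your aside that an involutive representative becomes balanced ``up to a Hamiltonian correction'' is not right as stated. A given involution can differ from $s_g(\iota_g)$ by a non-Hamiltonian flux, which acts as a character. This does no harm, because any such discrepancy is absorbed into $\eta$, and nothing in your argument needs the representative to be an involution.
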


\begin{proof}
The lifts of consecutive branch arcs form the chosen chain. The deck
involution preserves each lifted curve setwise, reverses its
orientation, and acts as $-I$ on homology. Set
$J=\PiShift s_g(\iota_g)^{-1}$. Orientation reversal followed by the
shift preserves the parity and support of each $E_i$. Write
$J(E_i)\simeq X_{\zeta_i}(\lambda_i)$, with
$\lambda_i\in\La^\times$. Since the $[\zeta_i]$ form an integral
basis, there is a unique $\eta\in H_g$ with
$\eta([\zeta_i])=\lambda_i^{-1}$ for every $i$. Thus
$\kappa_g=R_\eta J$ fixes each $E_i$.

The character conjugation formula and centrality of the shift give
$\kappa_gR_\rho\kappa_g^{-1}=R_{\rho^{-1}}$. As $\kappa_g$ fixes the
reference brane, it sends the character orbit with parameter $z$
to the orbit with parameter $z^{-1}$. Finally $J^2=1$ and
$JR_\eta J^{-1}=R_{\eta^{-1}}$, so
$\kappa_g^2=R_\eta R_{\eta^{-1}}=1$.
\end{proof}

\begin{proposition}\label{prop:object-rigidity}
Let $F\in G_g$ and suppose $F(E_i)\simeq E_i$ for
$1\leq i\leq2g$.  Then
\[
 F=1\quad\text{or}\quad F=\kappa_g
 \qquad\text{in }G_g.
\]
The induced action on every $HF^1(E_i,E_i)$ is respectively $+1$ or $-1$.
\end{proposition}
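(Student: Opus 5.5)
Using Proposition~\ref{prop:models}, we represent $F$ on $\A_g$ by a strictly unital, object-fixing $A_\infty$-functor $F=(F^0,F^1,F^2,\ldots)$ with $F^0(E_i)=E_i$. The argument then has three steps: determine $F^1$ up to one scalar, show that scalar is $\pm1$, and remove the higher Taylor coefficients by Hochschild integration.

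\emph{Step 1: $F^1$ is a single scalar.} Since $\A_g$ is minimal, $F^1$ is the induced map on Floer cohomology. Each $HF^0(E_i,E_i)=\La e_i$ is fixed by strict unitality. Each $HF^1(E_i,E_i)=\La u_i$ is acted on by a scalar $d_i$. For adjacent curves, $HF^*(E_i,E_{i+1})$ is one-dimensional by \eqref{eq:intersection}, so $F^1$ acts on it by a scalar $a_i$, and likewise by $b_i$ on $HF^*(E_{i+1},E_i)$. Non-adjacent pairs have zero Floer groups.

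We conjugate $F$ by the diagonal automorphism that rescales the chosen isomorphisms $F(E_i)\simeq E_i$, i.e.\ rescales the basis generator at each intersection point. This changes $a_i$ by $t_{i+1}/t_i$ and $b_i$ by $t_i/t_{i+1}$. We can therefore arrange $a_i=1$ for all $i$; this gauge is available because the chain is a tree.

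Next we use the cohomological products. The Poincar\'e-type product $HF(E_{i+1},E_i)\otimes HF(E_i,E_{i+1})\to HF^1(E_i,E_i)$ is non-zero, and so is its mirror landing in $HF^1(E_{i+1},E_{i+1})$. Compatibility of $F^1$ with these products gives $b_i=d_i$ and $b_i=d_{i+1}$. Hence all $d_i$ equal a common $d$, and $F^1$ is determined by $d$.

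\emph{Step 2: $d=\pm1$.} Apply Proposition~\ref{prop:family} with $\gamma=\delta=\zeta_i$, based at $F(E_i)\simeq E_i$. It gives $F_*(u_i)=\varepsilon_iu_i$, so $d=\varepsilon_i\in\{\pm1\}$, and in particular all $\varepsilon_i$ agree. If $d=-1$, we replace $F$ by $\kappa_g^{-1}F=\kappa_gF$. By Lemma~\ref{lem:hyperelliptic-normalization}, $\kappa_g$ fixes every $E_i$ and inverts the rank-one parameters, so Proposition~\ref{prop:family} shows it acts by $-1$ on each $HF^1(E_i,E_i)$. Since $\kappa_g^2=1$, we are reduced to the case $d=1$, where, after the gauge of Step 1, $F^1=\id$.

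\emph{Step 3: remove the higher Taylor coefficients.} This is the main obstacle. We argue by induction on the first $k\ge2$ with $F^k\ne0$ (after gauge). The leading term $F^k$ is a Hochschild cocycle in $CC^{k}(\A_g,\A_g)$ of total degree $1-k+k$, giving a class in $\HH^1$ concentrated in arity $k\ge2$. Its evaluation at each $E_i$ is its arity-zero and arity-one part, which is zero. By the injectivity in \eqref{eq:joint-evaluation}, the class vanishes, i.e.\ $W_2\HH^1=0$.

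So $F^k=\delta\beta$ for some $\beta$ of arity $k-1$. Composing $F$ with the formal diffeomorphism $\exp(\beta)$ kills $F^k$ up to a weak equivalence and only changes higher terms. Convergence of this infinite sequence of corrections is the content of the Hochschild integration result recorded in the appendix, which we would invoke directly. One delicate point is that $\beta$ must have arity at least one so that it does not disturb $F^1=\id$; if $k=2$ then $\beta$ has arity $1$, and we must check it can be chosen to vanish on the one-dimensional spaces or be absorbed into the gauge. The conclusion is that $F$ is weakly equivalent to the identity, so $F=1$ in $G_g$ by Proposition~\ref{prop:models}. In the original, unreduced case this gives $F=\kappa_g$.
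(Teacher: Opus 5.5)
Your proposal is correct and follows the paper's proof essentially step by step. The tree gauge and the edge products reduce $F^1$ to a single scalar $d$, Proposition~\ref{prop:family} forces $d=\pm1$, the sign $d=-1$ is absorbed by $\kappa_g$, and the higher Taylor coefficients are removed by $W_2\HH^1(\A_g)=0$ together with Proposition~\ref{prop:rigid-periodic}.

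One caution about your inductive sketch in Step~3. With $F^1=\id$ and $F^j=0$ for $2\le j<k$, the functor equations only give $[\mu^2,F^k]=0$, and the next equation says only that $[\mu^3,F^k]$ is cancelled by $[\mu^2,F^{k+1}]$. So $F^k$ by itself is not a Hochschild cocycle of $\A_g$. The genuine $W_2$-cocycle is $\log B(F)$, as in the appendix, so invoking Proposition~\ref{prop:rigid-periodic} directly, as you ultimately do, is the right way to finish.
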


\begin{proof}
By Proposition~\ref{prop:models}, the chosen isomorphisms
$F(E_i)\simeq E_i$ give a weakly invertible strictly unital
$A_\infty$-functor $F\colon\A_g\to\A_g$ whose object map is the identity.
Changing those isomorphisms changes the representative by an invertible
$A_\infty$ natural transformation. Choose generators
\[
 HF^*(E_i,E_i)=\La e_i\oplus\La u_i,
 \qquad |e_i|=0,\quad |u_i|=1,
\]
and, for $1\leq i<2g$, generators $x_i$ and $y_i$ of the one-dimensional
spaces $HF^*(E_i,E_{i+1})$ and $HF^*(E_{i+1},E_i)$.  
The two non-zero local products are
\begin{equation}\label{eq:edge-products}
 \mu^2(y_i,x_i)=c_i u_i,
 \qquad \mu^2(x_i,y_i)=c'_i u_{i+1},
 \qquad c_i,c'_i\in\La^\times.
\end{equation}
On the cohomological category, we write
\[
 F^1(u_i)=d_i u_i,
 \qquad F^1(x_i)=a_i x_i,
 \qquad F^1(y_i)=b_i y_i,
\]
where every coefficient is non-zero and $F^1(e_i)=e_i$.  Applying the
$A_\infty$-functor equation to the products in
\eqref{eq:edge-products} gives
\[
 d_i=a_ib_i=d_{i+1}.
\]
Thus all $d_i$ are one scalar $d$.  Conjugating by scalar automorphisms
$c_i\id_{E_i}$ changes $a_i$ to $c_{i+1}a_ic_i^{-1}$.  Since the
$A_{2g}$-graph is a tree, choose $c_1=1$ and then choose $c_{i+1}$
recursively so that every $a_i$ becomes one. The product identities then
give
\begin{equation}\label{eq:linear-form}
 F^1(e_i)=e_i,\qquad F^1(u_i)=d u_i,\qquad
 F^1(x_i)=x_i,\qquad F^1(y_i)=d y_i.
\end{equation}

Apply Proposition~\ref{prop:family} to the brane torsor through $E_i$.
The base point is fixed, so the induced map is $z\mapsto z^{\varepsilon_i}$
with $\varepsilon_i=\pm1$, and its logarithmic derivative is the action of
$F^1$ on $u_i$.  Hence $d=\varepsilon_i\in\{1,-1\}$ for every $i$.

It remains to prove that the first Taylor coefficient determines the
enhanced functor.  Let $C(\A_g)$ be the shifted Hochschild cochain complex
and let
\[
 W_rC(\A_g)=\{c=(c^0,c^1,\ldots)\mid c^j=0\text{ for }j<r\}.
\]
We use the image filtration
\begin{equation*}
 W_2\HH^1(\A_g):=
 \im\bigl(H^{\bar0}(W_2C(\A_g))\to H^{\bar0}(C(\A_g))\bigr).
\end{equation*}
If $[c]\in W_2\HH^1(\A_g)$, choose a cocycle representative with
$c^0=c^1=0$.  Evaluation at $E_i$ is represented by $c^0_{E_i}$, and
therefore every evaluation of $[c]$ vanishes.  The injectivity of
\eqref{eq:joint-evaluation} gives
\begin{equation}\label{eq:W2-vanishes}
 W_2\HH^1(\A_g)=0.
\end{equation}
The hypotheses of Proposition~\ref{prop:rigid-periodic} hold
for $\A_g$ by \eqref{eq:joint-evaluation} and
$HF^{\bar0}(E_i,E_i)=\La e_i$. It follows that an object-fixing
autofunctor with $F^1=\id$ is weakly equivalent to the identity.  Consequently two object-fixing functors with
the same first Taylor coefficient after scalar gauge define the same
element of $G_g$.

For $d=1$, the normal form \eqref{eq:linear-form} is the first
Taylor coefficient of the identity, and hence $F=1$.  For $d=-1$, the
geometric description preceding the proposition and
Proposition~\ref{prop:family} show that $\kappa_g^1(u_i)=-u_i$.
Using the same tree gauge, we may assume
that its entire first coefficient is given by the normal form
\eqref{eq:linear-form} with $d=-1$. Consequently,
$F\kappa_g^{-1}$ has identity first Taylor coefficient, hence is
trivial. Thus we conclude that
$F=\kappa_g$.

\end{proof}

\section{Proof of the main theorem}\label{sec:proof}

In this section, we first compute the graph kernels, and then recover the full group, which completes the main result. 

\begin{proof}[Proof of Theorem~\ref{thm:main}]
First consider $g\geq3$ and assume $F\in\ker\Psi_g$.  Since $F$ fixes the Schmutz
vertex of every $E_i$, Lemma~\ref{lem:vertex-normalization} gives
\begin{equation}\label{eq:kernel-images}
 F(E_i)\simeq(E_i,\lambda_i)[\epsilon_i],
 \qquad \lambda_i\in\La^\times,\quad
 \epsilon_i\in\Z/2.
\end{equation}
For adjacent objects the Floer group is one-dimensional in a definite
parity.  A degree-zero
autoequivalence preserves that parity, whereas shifting the two objects
changes it by $\epsilon_{i+1}-\epsilon_i$.  Hence
$\epsilon_i=\epsilon_{i+1}$, and connectedness of the chain gives one
common parity $\epsilon$.

Because $[\zeta_1],\ldots,[\zeta_{2g}]$ is an integral homology basis,
there is a unique character $\rho\in H_g$ satisfying
\[
 \rho([\zeta_i])=\lambda_i\qquad\text{ for }1\leq i\leq2g.
\]
The normalized element \(F_0=R_\rho^{-1}\PiShift^{-\epsilon}F\)
fixes all $E_i$. Proposition~\ref{prop:object-rigidity} then gives
$F_0\in\{1,\kappa_g\}$.  But
\[
 \Psi_g(\kappa_g)=\iota_g^{-1}\ne1,
\]
because $\Psi_g(\PiShift)=1$ and the hyperelliptic mapping class is
non-trivial. Since $F_0$ still belongs to the kernel, the second
possibility is excluded.  Thus $F=R_\rho\PiShift^\epsilon$ and by \eqref{eq:shift}, we have
\begin{equation}\label{eq:kernel-high}
 \ker\Psi_g=H_g\times\langle\PiShift\rangle.
\end{equation}

Now let $g=2$ and assume $F\in\ker\Psi^{\mathrm{or}}_2$.  In the oriented graph
the common reversal in \eqref{eq:kernel-images} is recorded, so the kernel
condition forces the common parity to be zero.  Removing the unique
character again gives an element $F_0$ which fixes the four chain objects.
Proposition~\ref{prop:object-rigidity} yields
$F_0\in\{1,\kappa_2\}$.  Both alternatives occur, since
\[
 \Psi^{\mathrm{or}}_2(\kappa_2)
 =\Psi^{\mathrm{or}}_2(\PiShift)\iota^{-1}=1.
\]
It follows that
\begin{equation}\label{eq:kernel-two}
 \ker\Psi^{\mathrm{or}}_2=H_2\sqcup H_2\kappa_2.
\end{equation}
The shift is central, while $s_2(\iota)$ acts on characters by inversion
because $\iota_*=-I$ on $H_1(\Sigma_2;\Z)$.  Therefore
\[
 \kappa_2R_\rho\kappa_2^{-1}=R_{\rho^{-1}}.
\]
Moreover, Lemma~\ref{lem:hyperelliptic-normalization} gives $\kappa_2^2=1$.  Finally we claim that $\kappa_2\notin H_2$. Otherwise, its conjugation action on the abelian group $H_2$ would be
trivial, contrary to inversion.  Equation \eqref{eq:kernel-two} is
therefore the generalized dihedral group
$H_2\rtimes_{\rho\mapsto\rho^{-1}}\Z/2$.

It remains to identify $G_g$ and to prove normality of $H_g$ in $G_g$.
For $g\geq3$, take an arbitrary
$F\in G_g$, put $f=\Psi_g(F)$, and apply \eqref{eq:kernel-high} to
$Fs_g(f)^{-1}$.  There are unique $\eta\in H_g$ and
$e\in\Z/2$ such that
\begin{equation}\label{eq:normal-form}
 F=R_\eta\PiShift^e s_g(f).
\end{equation}
For every $\rho\in H_g$, due to the abelianness of the character subgroup,
centrality of $\PiShift$, and \eqref{eq:conjugation}, we have
\begin{align*}
 F R_\rho F^{-1}
 &=R_\eta\PiShift^e s_g(f)R_\rho
   s_g(f)^{-1}\PiShift^{-e}R_\eta^{-1}\\
 &=R_\eta R_{f\cdot\rho}R_\eta^{-1}
  =R_{f\cdot\rho},
\end{align*}
which implies $H_g\triangleleft G_g$.  The multiplication law in the
normal form \eqref{eq:normal-form} is that of
\[
 G_g\cong(H_g\rtimes\Gamma_g)\times\langle\PiShift\rangle.
\]
For $g=2$, the oriented splitting and \eqref{eq:kernel-two} give a unique
expression \[
 R_\rho\kappa_2^e s_2(f)
=R_{\rho\eta^e}s_2(\iota^{-e}f)\PiShift^e.
\]  
Thus $G_2$ is generated by $H_2$, the mapping-class section, and the
central parity shift, with the conjugation relation
\eqref{eq:conjugation}. If $R_\rho s_2(f)\PiShift^e=1$, applying the oriented
quotient gives $f\iota^e=1$.  When $e=1$, the remaining equality would put
$\kappa_2$ in $H_2$. Consequently
\[
 G_2\cong(H_2\rtimes\Gamma_2)\times\langle\PiShift\rangle.
\]
In particular every element has a unique normal form
$R_\eta s_2(f)\PiShift^e$.  Repeating the   conjugation
calculation with this normal form proves
$FR_\rho F^{-1}=R_{f\cdot\rho}$ also in genus two.  Hence
$H_2\triangleleft G_2$ as well.
\end{proof}

The unoriented genus-two Schmutz map has target
$\Gamma_2/\langle\iota\rangle$.  In the coordinates of
Theorem~\ref{thm:main}, the two genus-two maps are
\[
 \Psi^{\mathrm{or}}_2(\rho,f,\PiShift^e)=f\iota^e,
 \qquad
 \Psi^{\mathrm{un}}_2(\rho,f,\PiShift^e)
 =f\bmod\langle\iota\rangle.
\]
Hence the unoriented kernel is
$\Dih(H_2)\times\langle\PiShift\rangle$.

Let $\mathscr F_1$ be the two-periodization of
the graded Fukaya category of the torus. 
Homological mirror symmetry identifies the second with the
two-periodization of $\Perf(E_Q)$ for the Tate elliptic curve $E_Q$
\cite{PolishchukZaslow,LekiliPerutz}. According to \cite{HKR,Yekutieli}, we have
\[
 \HH^1(\Perf(E_Q))
 \cong
 \bigoplus_{p+q=1}
 H^q(E_Q,\wedge^p T_{E_Q})
 =
 H^0(E_Q,T_{E_Q})
 \oplus
 H^1(E_Q,\mathcal O_{E_Q}),
\]
where both summands are one-dimensional. For
$g\geq2$, the closed--open isomorphism gives
$\HH^{\mathrm{odd}}(\mathscr F_g)\cong H^1(\Sigma_g;\La)$.  Consequently
\[
 \dim_{\La}\HH^{\mathrm{odd}}(\mathscr F_g)=2g
 \qquad\text{ for }g\geq1.
\]
Since Hochschild cohomology is invariant under enhanced Morita equivalence,
an equivalence $\mathscr F_g\simeq\mathscr F_h$ implies $g=h$.

\begin{corollary}
\label{cor:reconstruct}
Let $g,h\geq1$.  If $G_g\cong G_h$ as abstract groups, then $g=h$.
\end{corollary}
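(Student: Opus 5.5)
We extract the genus from $G_g$ using only group-theoretic invariants. The plan is to locate a canonical abelian normal subgroup whose "rank" equals $2g$, or to use a numerical invariant such as virtual cohomological dimension.

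For $g\geq2$, Theorem~\ref{thm:main} gives $G_g\cong(H_g\rtimes\Gamma_g)\times\Z/2$ with $H_g\cong(\La^\times)^{2g}$. For $g=1$ we need an analogous description. Via mirror symmetry, $G_1$ is the derived Picard group of $\Perf(E_Q)$ modulo $[2]$. By Orlov-type results for elliptic curves this is an extension of $\widetilde{SL_2(\Z)}$-type data by $\Pic^0(E_Q)\times E_Q(\La)$, i.e.\ by translations and degree-zero line bundles, together with shifts.

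The invariant we propose uses the maximal divisible abelian subgroups and the centralizer structure. A cleaner route is the following. Let $A\subset G$ be the subgroup generated by all elements with uncountably many (or infinitely many) conjugates that commute with a finite-index subgroup of their conjugacy class... This is too delicate. Instead:

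\emph{Step 1.} Identify the largest abelian normal subgroup. In $G_g$ the group $N=H_g\times\langle\PiShift\rangle$ is abelian and normal. Any abelian normal subgroup maps to an abelian normal subgroup of $\Gamma_g$. For $g\geq3$ such a subgroup is trivial, and for $g=2$ it lies in $\langle\iota\rangle$. So $N$ is the maximal abelian normal subgroup when $g\geq3$; for $g=2$ one checks the extra $\iota$-part does not commute with $H_2$. This step is intrinsic.

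\emph{Step 2.} Recover $2g$ from $N$ together with the conjugation action of $G/N$. Here $G_g/N\cong\Gamma_g$ acts on $N\supset H_g$. The divisible part $D(N)$, namely the maximal divisible subgroup, equals $H_g$ because $\La^\times$ is divisible ($\La$ is algebraically closed) and $\Z/2$ is not. The torsion subgroup of $H_g$ of exponent $2$ is $(\Z/2)^{2g}$, which has order $2^{2g}$. Hence $g$ is determined by $|D(N)[2]|$. For $g=1$, run the same argument with the maximal abelian normal subgroup of $G_1$; its divisible part is $\Pic^0(E_Q)\times E_Q(\La)$, whose $2$-torsion has order $2^{4}$. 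Thus one must also show that $G_1$'s divisible $2$-torsion ($16$) differs from genus two ($2^4=16$). This is the main obstacle: a naive count collides for $g=1,2$. To separate them, use the quotient $G/N$. This quotient is $\Gamma_2$ for $g=2$, while for $g=1$ it is virtually $SL_2(\Z)$, which is virtually free, of vcd~$1$. Since $\vcd\Gamma_2=4g-5=3$, the two differ, and a group of vcd~$3$ cannot be isomorphic to a virtually free group. For $g\geq2$ one may equally use $\vcd(G_g/N)=4g-5$ as a second check.

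The hardest step is making Step~1 fully rigorous for $g=1$ and $g=2$, where the centres of $\Gamma_2$ and $SL_2(\Z)$ intervene. The plan there is to define $N$ as the maximal abelian normal subgroup containing the divisible elements, then verify case by case that it is as claimed.
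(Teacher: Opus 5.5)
Your argument is correct and ends with the same two invariants as the paper. These are the torsion of a canonical divisible subgroup, which collides only for $\{g,h\}=\{1,2\}$, and the virtual cohomological dimension of the quotient, $1$ versus $3$. Where you differ is in how you isolate the characteristic subgroup.

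You take the maximal abelian normal subgroup $N$ and then its divisible part. The paper takes directly the largest divisible abelian subgroup $D_g$, namely $H_g$ for $g\geq2$ and $E_Q(\La)\times\widehat E_Q(\La)$ for $g=1$.

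\emph{What the paper's choice buys.} It works uniformly in $g$ and needs only weak inputs. The image of a divisible abelian subgroup in $\Gamma_g\times\Z/2$, or in the finite extension of $\operatorname{SL}(2,\Z)$, is a divisible abelian subgroup. Abelian subgroups there are finitely generated (by BLM) or virtually cyclic, so that image is trivial. No knowledge of normal subgroups or centres is required.

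\emph{What your route costs.} It needs the stronger input that $\Gamma_g$ for $g\geq3$, $\Gamma_2/\langle\iota\rangle$ and $\operatorname{PSL}(2,\Z)$ have no nontrivial abelian normal subgroups. That is an Ivanov-type fact which you should cite. It also needs an extra step at the centre, which you flag as the hardest part.

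\emph{Closing the central step.} Your phrase that the $\iota$-part ``does not commute with $H_2$'' is not enough alone; you also need normality plus divisibility. Suppose an abelian normal $A\subset G_2$ contains $x=R_\rho s_2(\iota)\PiShift^e$. Since $\iota_*=-I$, one computes $R_\sigma xR_\sigma^{-1}x^{-1}=R_{\sigma^2}$, so $R_{\sigma^2}\in A$ for every $\sigma$. Divisibility of $H_2$ then gives $H_2\subset A$. But conjugation by $x$ inverts $H_2$, which is not $2$-torsion, so $A$ cannot be abelian.

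The same computation works in genus one with $-1\in\operatorname{Aut}(E_Q,0)$, which inverts $E_Q\times\widehat E_Q$. It shows that every abelian normal subgroup of $G_1$ lies in $D_1\times\langle[1]\rangle$. Hence your $N$ exists and is unique, and $G_1/N$ is a finite extension of $\operatorname{PSL}(2,\Z)$ of virtual cohomological dimension $1$. This divisibility mechanism is exactly what the paper exploits globally, which is why it can skip the case analysis.

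Using $2$-torsion rather than odd $p$-torsion is harmless once you have passed to the divisible part, since $D(N)=H_g$ no longer contains $\PiShift$.
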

\begin{proof}
Set
\[
 D_1:=E_Q(\La)\times\widehat E_Q(\La),
 \qquad D_g:=H_g\quad\text{ for }g\geq2,
\]
where $\widehat E_Q=\Pic^0(E_Q)$. Following \cite{OrlovAbelian}, there exists
the following exact sequence
\[
1\longrightarrow\operatorname{Aut}(E_Q,0)
 \longrightarrow G_1/D_1
 \longrightarrow\operatorname{SL}(2,\Z)
 \longrightarrow1.
\]
Combining with Theorem~\ref{thm:main}, $D_g$ is the largest divisible abelian normal subgroup of $G_g$.  The
groups $D_g$ are divisible because $\La^\times$ is divisible and
$E_Q(\La)=\La^\times/Q^{\Z}$, while $E_Q\simeq\widehat E_Q$.  The relevant quotients are a finite
extension of $\operatorname{SL}(2,\Z)$ when $g=1$ and
$\Gamma_g\times\Z/2$ when $g\geq2$.  Abelian subgroups of
$\operatorname{SL}(2,\Z)$ are virtually cyclic, and  abelian subgroups of
$\Gamma_g$ are finitely generated \cite{BLM}.  Hence the image of a
divisible abelian subgroup in either quotient is trivial.  Thus $D_g$ is
characteristic.

An isomorphism $G_g\cong G_h$ identifies $D_g$ with $D_h$.  For every
odd prime $p$ one has
\[
 \dim_{\mathbb F_p}D_g[p]
 =
 \begin{cases}
 4,&g=1,\\
 2g,&g\geq2.
 \end{cases}
\]
It follows that either $g=h$ or $\{g,h\}=\{1,2\}$.  The latter possibility
is excluded by
\[
 \vcd(G_1/D_1)=1,
 \qquad
 \vcd(G_2/D_2)=\vcd(\Gamma_2)=3
\]
due to \cite{Harer}.  Hence $g=h$.
\end{proof}

\appendix

\section{The two-periodic Hochschild exponential}\label{app:exponential}

In this section, we record the two-periodic form of Hochschild
integration that is used in Proposition~\ref{prop:object-rigidity}. For the general theory of Hochschild integration for
$\Z$-graded categories, we refer to
\cite{OpperIntegration,OpperSurface}.

Let $\C$ be a small, uncurved, strictly unital two-periodic
$A_\infty$-category over a characteristic-zero field.  Its shifted
Hochschild cochain space is given by
\[
 C^{\bar\epsilon}(\C)
 =\prod_{d\geq0}\prod_{X_0,\ldots,X_d}
 C^{\bar\epsilon}_d(X_0,\ldots,X_d),
 \]
 where
 \[
 C^{\bar\epsilon}_d(X_0,\ldots,X_d)
 =\Hom^{\bar\epsilon}\!\left(
 \C(X_{d-1},X_d)[1]\otimes\cdots\otimes\C(X_0,X_1)[1],
 \C(X_0,X_d)[1]\right).
\]
For $c=(c^0,c^1,\ldots)$, let
$W_rC(\C)\coloneq\{c\mid c^d=0\text{ for }d<r\}$.  The Hochschild differential is defined as
$\delta_\C=[\mu_\C,-]$, and denote by 
\[
 W_2\HH^1(\C)\coloneq
 \im\bigl(H^{\bar0}(W_2C(\C),\delta_\C)
 \longrightarrow H^{\bar0}(C(\C),\delta_\C)\bigr).
\]

In the suspended bar convention, a normalized cochain vanishes when
an input is a strict unit. The $A_\infty$ structure is an odd bar
coderivation $b$, and its Hochschild differential is $[b,-]$. An
endomorphism of the bar construction whose unary term is the identity
is pronilpotent in tensor length. Its logarithm and exponential are
finite on each fixed tensor length. We use these operations only for
the following rigidity statement.

\begin{proposition}\label{prop:rigid-periodic}
Let $\C$ be a minimal, uncurved, strictly unital two-periodic
$A_\infty$-category over a field $k$ of characteristic zero. Suppose
that $H^{\bar0}\operatorname{hom}(X,X)=k e_X$ for every object and that
the joint evaluation map
\[
 \HH^{\mathrm{odd}}(\C)\longrightarrow
 \prod_X H^{\bar1}\operatorname{hom}(X,X)
\]
is injective. Every strictly unital $A_\infty$-autofunctor which fixes
all objects and has first Taylor coefficient equal to the identity
is weakly equivalent to the identity.
\end{proposition}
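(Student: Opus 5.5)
We argue by induction on tensor length, modifying $F$ one Taylor coefficient at a time by weak equivalences. Suppose that, after earlier modifications, $F^1=\id$ and $F^j=0$ for $2\leq j<r$ (the base case is $r=2$). The first step is to read off the $A_\infty$-functor equation in arity $r$. Because $\C$ is minimal, $\mu^1=0$, and every term containing some $F^j$ with $1<j<r$ vanishes. What remains says exactly that $F^r$ is a Hochschild cocycle of pure arity $r$: $[\mu^2,F^r]=0$ holds on the associated graded for the length filtration. The full condition $\delta_\C$-closed holds only modulo $W_{r+1}$. To avoid these correction terms I would work with $\phi=\log F$, taken as a coalgebra automorphism of the bar construction. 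Since $F^1=\id$, $F$ is pronilpotent, so $\phi$ is a well-defined even coderivation of the bar construction with $\phi\in W_r C$ and leading term $F^r$. The condition that $F$ commutes with $b$ is then equivalent to $[b,\phi]=0$: indeed $F=\exp\phi$ and $F b F^{-1}=\exp(\mathrm{ad}_\phi)b$. The argument for this last step is the unipotent one, namely that $\mathrm{ad}_\phi$ is pronilpotent, so $\exp(\mathrm{ad}_\phi)b=b$ forces $\mathrm{ad}_\phi b=0$; the key point is that $\log$ of a unipotent operator with trivial action is $0$. Hence $\phi$ is an even Hochschild cocycle lying in $W_2C$, and its class lies in $W_2\HH^{\mathrm{odd}}(\C)$ in the shifted convention.

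Next we show the class $[\phi]$ vanishes, reading the shifted $\bar0$ part as $\HH^1$. A cocycle with $\phi^0=\phi^1=0$ has evaluation $\phi^0_X=0$ at every object, so the injectivity hypothesis gives $[\phi]=0$. We may therefore write $\phi=[b,h]$ for an odd-shifted (i.e. degree $\bar1$ in the shifted complex, unshifted degree $\bar0$) Hochschild cochain $h$. The delicate point is that we need $h$ itself to have no arity-zero component, or at least we must control that component. An arity-zero component $h^0$ is a family of elements $h^0_X\in H^{\bar0}\hom(X,X)=ke_X$. Since $\C$ is strictly unital and normalized cochains kill units, $[b,\lambda e_X]$ vanishes in arities $\geq1$. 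So we may drop $h^0$ entirely and assume $h\in W_1C$. This is where the hypothesis $H^{\bar0}\hom(X,X)=ke_X$ enters.

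With $h\in W_1C$, we next show the arity-one part $h^1$ causes no trouble. Comparing arity-one parts of $\phi=[b,h]$ gives $[\mu^2,h^1]$-type terms of arity $2$ equal to $\phi^2$, while $\phi^1=0$ is automatic. Now $\exp(h)$ is a coalgebra automorphism whose unary part $\exp(h^1)$ may differ from $\id$. This is harmless: $\psi_t=\exp(t h)$ defines an $A_\infty$-homotopy, i.e. a natural transformation, between $\exp(\phi)$ and the identity. The standard formula to use is $\frac{d}{dt}\bigl(\exp(-th)\,b\,\exp(th)\bigr)$. Concretely, I would replace $F$ by $G=\exp(-h)\circ F\circ\exp(h)$, which is conjugate to $F$, and then further by $\exp(h)\circ F$ composed with correction terms. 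Rather than an exact identity, the cleaner route is filtered: the arity-$r$ leading term of $F$ differs from that of a functor weakly equivalent to $\id$ by a $\mu^2$-coboundary. Modifying by the corresponding homotopy, which changes $F$ within its weak equivalence class via a natural transformation whose unary part is $e_X+h^{r-1}$, pushes $F$ into $W_{r+1}$ with $F^1$ still the identity.

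Finally we pass to the limit. The successive homotopies are unipotent and have increasing arity, so their composite converges in the complete length filtration. The resulting natural transformation from $\id$ to $F$ is invertible, since its unary part is $\id$. The main obstacle I expect is the sign and degree bookkeeping in two places: making the $\log/\exp$ identification of functors with Hochschild cocycles work in the two-periodic suspended convention, and verifying that each filtered step really produces an $A_\infty$ natural transformation rather than merely a cohomological statement. I would handle both by working entirely on the bar construction with strict coderivations.
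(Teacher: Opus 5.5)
Your first two steps agree with the paper: $\phi=\log B(F)$ is a Hochschild cocycle in $W_2C(\C)$, its evaluations vanish, so $\phi=[b,h]$, and the nullary part of $h$ can be discarded. One correction there: for $h^0_X=\lambda_Xe_X$ the cochain $[b,h^0]$ does not in general vanish in arity one. Its arity-one component is $\pm(\lambda_Y-\lambda_X)a$, and you need, as the paper does, that this component equals $\phi^1=0$ by minimality; strict unitality only kills the arities $\geq2$.

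\textbf{The gap.} The genuine gap is the last step, from ``$\phi=[b,h]$ with $h^0=0$'' to ``$F\simeq\id$''. This is exactly the statement that the exponential depends only on the class in $W_2\HH^1(\C)$. The paper imports it from the integration theorem of \cite{OpperIntegration}, and your sketch does not replace it.

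\emph{The homotopy $\exp(th)$ is not well defined.} As you note yourself, $h$ is odd as a coderivation, so $h^1$ is an odd map on morphism spaces. Consequently $\exp(th)$ is not a coalgebra automorphism, ``$\exp(h^1)$'' is not a unary Taylor coefficient, and $\exp(-h)\circ F\circ\exp(h)$ is not an $A_\infty$-functor.

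\emph{The filtered fallback fails as stated.} It is asserted rather than proved, and with $\mu^1=0$ the identity $\phi=[b,h]$ only gives
\[
 F^r=\phi^r=[\mu^2,h^{r-1}]+\sum_{k\geq3}[\mu^k,h^{r+1-k}].
\]
The terms involving $h^1,\dots,h^{r-2}$ are $[\mu^2,-]$-cocycles that are not coboundaries in general; for instance $[\mu^3,h^1]$ with $h^1$ an outer odd derivation. So a natural transformation with components $e_X$ and $h^{r-1}$ alone cannot remove $F^r$. Absorbing these terms would require one of two things. The first is a primitive $h\in W_{r-1}$, i.e.\ vanishing in the cohomology of the filtered subcomplex, which is stronger than what the evaluation hypothesis gives, since $W_2\HH^1$ is only an image. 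The second is a transformation built from all of $h^1,\dots,h^{r-1}$ simultaneously, and constructing it is precisely the missing content.

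\textbf{A repair.} A simple fix avoids the induction altogether. Since $\phi\in W_2$ is an even pronilpotent coderivation commuting with $b$, each arity component of $F_t=\exp(t\phi)$ is polynomial in $t$. Moreover $F_t$ is a coalgebra automorphism commuting with $b$, and
\[
 \partial_tF_t=\phi F_t=b\,(hF_t)+(hF_t)\,b,
\]
where $hF_t$ is an odd $(F_t,F_t)$-coderivation. Hence $F_t+dt\cdot hF_t$, with the appropriate Koszul sign, is an $A_\infty$-functor $\C\to\C\otimes k[t,dt]$. In characteristic zero the inclusion $\C\to\C\otimes k[t,dt]$ is a quasi-equivalence, and $\ev_0$ and $\ev_1$ are both left inverses of it. They are therefore weakly equivalent, and composing gives $\id\simeq F$. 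This handles $h^1\neq0$ directly.

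Alternatively, cite the integration theorem as the paper does. Once $h^0=0$, one has $[c]=0$ in $W_2\HH^1(\C)$, and hence $[F]=\operatorname{Exp}([c])=[\id]$.
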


\begin{proof}
Write $B(F)$ for the bar map of the functor. The operator
$N=B(F)-1$ strictly lowers tensor length. Hence
\[
 c=\log B(F)=\sum_{m\geq1}\frac{(-1)^{m+1}}{m}N^m
\]
is defined in each tensor length. To verify that $c$ is a
coderivation, form the polynomial family
$B(F)^t=\sum_m\binom{t}{m}N^m$. In any fixed tensor length the
coalgebra identity holds when $t$ is a nonnegative integer, hence
as a polynomial identity. Differentiate at $t=0$. Since $B(F)$
commutes with $b$, so does $c$. Its Taylor components in arities zero
and one vanish, so $c$ is a Hochschild cocycle in $W_2$.

The class of $c$ has zero evaluation at every object. By the
assumption on joint evaluation there is an odd suspended cochain $h$
such that $c=[b,h]$. Its nullary component is an even scalar
endomorphism, say $h_X^0=\lambda_Xe_X$. Because $c^1=0$ and the
category is minimal, $(\lambda_Y-\lambda_X)a=0$ for every
$a\in\operatorname{hom}(X,Y)$. The cochain consisting of these scalar
nullary components is closed. For its higher differential terms this
uses strict unitality and uncurvedness. Subtracting it from $h$ gives
\[
 c=[b,h],\qquad h^0=0.
\]

We use the integration map for the completed Hochschild
brace algebra \cite[Theorems~6.12 and~6.23]{OpperIntegration}.
It associates to a class in $W_2\HH^1(\C)$ the weak
equivalence class of its exponential $A_\infty$-functor.
For the cocycle $c=\log B(F)$ constructed above, this
exponential functor is $F$.

The same construction applies to the present two-periodic
category.  The normalized Hochschild cochains are complete
for the arity filtration.  In each fixed arity the brace
operations, exponential and homotopies are finite sums.
Their Koszul signs depend only on the parities of the
inputs, so the identities used in
\cite[Theorems~6.12 and~6.23]{OpperIntegration}
hold with degrees reduced modulo two.  In particular,
the integration map is defined on $W_2\HH^1(\C)$ and
depends only on the Hochschild class.

We have shown that $c=[b,h]$, with $h^0=0$.  Hence
$[c]=0$ in $W_2\HH^1(\C)$, and therefore
\[
 [F]=\operatorname{Exp}_{\C}([c])
    =\operatorname{Exp}_{\C}(0)
    =[\id_{\C}]
\]
in the group of weak equivalence classes of
$A_\infty$-autofunctors.  This proves the proposition.
\end{proof}


\begin{thebibliography}{99}

\bibitem{Abouzaid}
M.~Abouzaid,
\emph{On the Fukaya categories of higher genus surfaces},
Adv. Math. \textbf{217} (2008), no.~3, 1192--1235.

\bibitem{AurouxSmith}
D.~Auroux and I.~Smith,
\emph{Fukaya categories of surfaces, spherical objects, and mapping class
groups}, Forum Math. Sigma \textbf{9} (2021), Paper No.~e26, 50 pp.

\bibitem{BLM}
J.~Birman, A.~Lubotzky, and J.~McCarthy,
\emph{Abelian and solvable subgroups of the mapping class groups},
Duke Math. J. \textbf{50} (1983), 1107--1120.

\bibitem{COS}
A.~Canonaco, M.~Ornaghi, and P.~Stellari,
\emph{Localizations of the category of $A_\infty$ categories and internal
Homs}, Doc. Math. \textbf{24} (2019), 2463--2492.


\bibitem{Harer}
J.~Harer,
\emph{The virtual cohomological dimension of the mapping class group of an
orientable surface}, Invent. Math. \textbf{84} (1986), 157--176.

\bibitem{HKK}
F.~Haiden, L.~Katzarkov, and M.~Kontsevich,
\emph{Flat surfaces and stability structures},
Publ. Math. Inst. Hautes \`Etudes Sci. \textbf{126} (2017), 247--318.

\bibitem{KellerDG}
B.~Keller,
\emph{On differential graded categories},
International Congress of Mathematicians, Vol.~II,
Eur. Math. Soc., Z\"urich, 2006, 151--190.

\bibitem{LekiliPerutz}
Y.~Lekili and T.~Perutz,
\emph{Arithmetic mirror symmetry for the $2$-torus},
arXiv:1211.4632.

\bibitem{LekiliPolishchuk}
Y.~Lekili and A.~Polishchuk,
\emph{Derived equivalences of gentle algebras via Fukaya categories},
Math. Ann. \textbf{376} (2020), no.~1--2, 187--225.

\bibitem{OrlovAbelian}
D.~Orlov,
\emph{Derived categories of coherent sheaves on abelian varieties and
equivalences between them},
Izv. Math. \textbf{66} (2002), no.~3, 569--594.

\bibitem{PolishchukZaslow}
A.~Polishchuk and E.~Zaslow,
\emph{Categorical mirror symmetry: the elliptic curve},
Adv. Theor. Math. Phys. \textbf{2} (1998), no.~2, 443--470.

\bibitem{OpperIntegration}
S.~Opper,
\emph{Integration of Hochschild cohomology, derived Picard groups and
uniqueness of lifts}, arXiv:2405.14448.

\bibitem{OpperSurface}
S.~Opper,
\emph{Autoequivalences of Fukaya categories of surfaces and graded gentle
algebras}, arXiv:2510.11543.

\bibitem{PascaleffSibilla}
J.~Pascaleff and N.~Sibilla,
\emph{Fukaya categories of higher-genus surfaces and pants decompositions},
arXiv:2103.03366.

\bibitem{Schmutz}
P.~Schmutz Schaller,
\emph{Mapping class groups of hyperbolic surfaces and automorphism groups
of graphs}, Compositio Math. \textbf{122} (2000), 243--260.

\bibitem{SeidelBook}
P.~Seidel,
\emph{Fukaya categories and Picard--Lefschetz theory},
Zurich Lectures in Advanced Mathematics,
European Mathematical Society, Z\"urich, 2008.

\bibitem{SeidelFlux}
P.~Seidel,
\emph{Abstract analogues of flux as symplectic invariants},
M\'em. Soc. Math. Fr. (N.S.) \textbf{137} (2014), 135 pp.

\bibitem{Toen}
B.~To\"en,
\emph{The homotopy theory of dg-categories and derived Morita theory},
Invent. Math. \textbf{167} (2007), no.~3, 615--667.

\bibitem{HKR}
G.~Hochschild, B.~Kostant, and A.~Rosenberg,
\emph{Differential forms on regular affine algebras},
Trans. Amer. Math. Soc. \textbf{102} (1962), 383--408.

\bibitem{Yekutieli}
A.~Yekutieli,
\emph{The continuous Hochschild cochain complex of a scheme},
Canad. J. Math. \textbf{54} (2002), no.~6, 1319--1337.

\end{thebibliography}
\end{document}